\theoremstyle{plain}
\newtheorem{theorem}{Theorem}[section]
\newtheorem{lemma}[theorem]{Lemma}
\newtheorem{proposition}[theorem]{Proposition}
\newtheorem{corollary}[theorem]{Corollary}
\theoremstyle{definition}
\newtheorem{remark}[theorem]{Remark}
\numberwithin{equation}{section}
\title[Loops and monotonicity]{On loops in water wave branches and monotonicity of water waves}
\author{Vladimir Kozlov$^1$}
\begin{document}
	
\begin{abstract} We give a quite simple approach how to prove the absence of loops in bifurcation branches of water waves in rotational case with
 arbitrary vorticity distribution. The supporting flow may contain stagnation
points and critical layers and water surface is
allowed to be overhanging. Monotonicity properties of the free surface are presented.
Especially simple criterium of absence of loops is given for bifurcation branches when the bifurcation parameter is the water wave period. We show that there are no loops if you start from a water wave with  a positive/negative vertical component of velocity on the positive half period.

\end{abstract}

\maketitle

\section{Introduction}

We consider steady water waves on vortical flows. We admit existence of stagnation points and critical layers inside the flow and the water surface is allowed to be overhanging.

A standard way to prove the existence of large amplitude water waves is a construction of bifurcating branches of solutions which start from a trivial-horizontal wave and then approach a certain limit configuration. %try to understand how long we can continue such branches.
We will use here analytic bifurcation theory developed in \cite{BTT}.  According to the analytic theory there are several limit options:

(a) a stagnation point on the free surface;

(b) self-intersection of the free surface, when the overhanging is allowed;

(c) the free surface can touch the bottom.

(d) constants of the problem, which depend on the bifurcation parameter can go to infinity or zero.

There is another option: existence of a closed curve.
Usually this option can be analysed and excluded by using a nodal analysis, see \cite{CSst} for the case of unidirectional flows and \cite{CSrVar} for the case of constant vorticity. The aim of this paper is to study this option in different cases, when the water waves and supporting flow may  contain stagnation points and critical layers and when the free surface is allowed to be overhanging, see \cite{Var23} and \cite{Wa} for discussions of problems arising in the nodal analysis.

We prove that there are no loops if the closed branch may contain only one uniform stream solution. In the case when the bifurcation parameter is the water wave period, we show that there are no closed branches at all. This is an interesting difference betweed branches with fixed period and when the bifurcation parameter is chosen as the period. The first advantage of the choice of the period as a bifurcation parameter is the fact that the transversality condition, required for existence of small amplitude water waves, is satisfied automatically. The second one there are no loops on the analytic branches of water waves.
This means that analytic branches of water waves are unbounded and we can reach realy large water waves along such branches.

Let us explain the main idea of our approach. The problem is formulated in the natural Cartesian  coordinates $(x,y)$ and the nodal analysis is based on the analysis of the sign of the vertical component of the velocity. The geometry of water domain is not fixed in these variables, it is unknown. To construct branches of water waves another variables are used in which the water domain is fixed. This allows to apply general theorems on existence of global branches of water waves which analytically depend on a bifurcation parameter. We use the variables $(x,y)$ for application of maximum principle to the vertical component of the velocity and the second set of variables for analysis of continuous and analytical properties of the vertical component  along the branch of solutions.

\subsection{Formulation of the problem}

We consider steady surface waves in a two-dimensional channel bounded below by a flat,
rigid bottom and above by a free surface that does not touch the bottom. The surface tension is neglected and the water motion can be rotational.
In appropriate Cartesian coordinates $(x, y )$, the bottom ${\mathcal B}$ coincides with the line
$y=0$ and gravity acts in the negative $y$ -direction. We choose the frame of reference so that the velocity field is time-independent as well as the free-surface ${\mathcal S}$, which is located in the half-plane $y>0$ and given in a parametric form $x=u(s)$, $y=v(s)$, $s\in \Bbb R$. It is assumed that $|v'|+|u'|\neq 0$,
$v(s)>0$ and $u(s)\to\pm\infty$ when $s\to\pm\infty$. We assume also that the curve ${\mathcal S}$ does not intersect itself. We denote  the strip-like domain between ${\mathcal B}$ and ${\mathcal S}$ by ${\mathcal D}$. The domain ${\mathcal D}$ is assumed to be symmetric with respect to the vertical line $x=0$ and $\Lambda$-periodic with respect to $x$, where $\Lambda$ is a positive number. If we introduce the set
$$
\Omega=\{(x,y)\in{\mathcal D}\,:\,-\Lambda/2<x\leq\Lambda/2\},
$$
then
$$
{\mathcal D}=\bigcup_{j=-\infty}^\infty \Omega_j,\;\;\Omega_j=\{(x+j\Lambda,y)\,:\,(x,y)\in\Omega \}.
$$
The dependence of $u$ and $v$ is also periodic in $s$ but the period can be different from $\Lambda$. If,  for example,  $s$ is the arc length along ${\mathcal S}$, measured from the point $(0,v(0))$ (it is supposed here that $u(0)=0$ and the length comes with the sign $+$ for $s>0$ and with sign $-$ for $s<0$),  the period of the functions $u$ and $v$ is equal to the length of the free surface from $(0,v(0))$ to $(u(s),v(s))$.
We assume that ${\mathcal D}$ is simple connected, i.e. there are no wholes inside ${\mathcal D}$.

 To describe the flow inside ${\mathcal D}$, we will use the stream function $\psi$, which is connected with the velocity vector $({\bf u},{\bf v})$ by ${\bf u}=-\psi_y$ and ${\bf v}=\psi_x$. Since the surface tension is neglected, the function $\psi$  after a certain scaling, satisfies the following free-boundary problem (see for example \cite{KN14}):
\begin{eqnarray}\label{K2a}
&&\Delta \psi+\omega(\psi)=0\;\;\mbox{in ${\mathcal D}$},\nonumber\\
&&\frac{1}{2}|\nabla\psi|^2+y-h=Q\;\;\mbox{on ${\mathcal S}$},\nonumber\\
&&\psi=0\;\;\mbox{on ${\mathcal S}$},\nonumber\\
&&\psi=-m\;\;\mbox{for $y=0$},
\end{eqnarray}
where $\omega$ is a vorticity function, $Q$ is the Bernoulli constant and ${\mathcal D}$ is an unknown domain.
We always assume that the function $\psi$ is even and $\Lambda$ periodic with respect to $x$ and
\begin{equation}\label{J27ba}
\nabla\psi \neq 0\;\;\mbox{on ${\mathcal S}$,}
\end{equation}
which means that there are no stagnation point  on the surface ${\mathcal S}$.

\section{Main Lemma}\label{SAu14c}

Let
$$
D=\{(x,y)\in {\mathcal D}\,:\, 0<x<\Lambda/2,\}\;\;S=\{(x,y)\in {\mathcal S}\,:\, 0<x<\Lambda/2\}.
$$
Then the part of the boundary $\partial {\mathcal D}\cap \{x=0\}$ is an interval $[0,y_0]$ and the part of the boundary
$\partial {\mathcal D}\cap \{x=\Lambda/2\}$ is an interval $[0,y_1]$.
The solution $\psi$ solves the problem
\begin{eqnarray}\label{KK2a}
&&\Delta \psi+\omega(\psi)=0\;\;\mbox{in $D$},\nonumber\\
&&\frac{1}{2}|\nabla\psi|^2+y-h=Q\;\;\mbox{on $S$},\nonumber\\
&&\psi=0\;\;\mbox{on $S$},\nonumber\\
&&\psi=-m\;\;\mbox{for $y=0$}
\end{eqnarray}
 and
\begin{equation}\label{Au3a}
\psi_x(0,y)=0\;\;\mbox{on $(0,y_0)$ and}\;\;\psi_x(\Lambda/2,y)=0\;\;\mbox{on $(0,y_1)$}.
\end{equation}

We always assume that at least $\omega\in C^{1,\alpha}$ with certain $\alpha\in (0,1)$. This implies, in particular that $\psi\in C^{3,\alpha}(\overline{\mathcal D})$.

\begin{lemma}\label{L1} Let $\psi\in C^{3}(\overline{\mathcal D})$ and let  $\psi$ solves the problem {\rm (\ref{KK2a})}, {\rm (\ref{Au3a})}. If $\psi_x\geq 0$ in $D$ and $\psi_x$ is not identically zero, then the following assertions are valid for the function $u=\psi_x$:

\begin{eqnarray}\label{Au16a}
&&(i)\;\;\;\;u>0\;\; \mbox{for $(x,y)\in D\cup S$}\nonumber\\
&&(ii)\;\;\;u_x(0,y)>0\;\; \mbox{for $0<y\leq y_0$,\;\; $u_x(\Lambda/2,y)<0$ for $0<y\leq y_1$}\nonumber\\
&&\;\;\;\;\;\;\;\;\mbox{ and \;\;$u_y(x,0)>0$\; for $0<x<\Lambda/2$}\nonumber\\
&&(iii)\;\;u_{xy}(0,0)>0,\;\; u_{xy}(\Lambda/2,0)<0.
\end{eqnarray}

\end{lemma}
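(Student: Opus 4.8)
The idea is to treat $u=\psi_x$ as a solution of the linearised problem and to use the maximum principle in three guises: the strong maximum principle in the interior of $D$, Hopf's lemma on the three straight pieces of $\partial D$, and a corner point lemma of Serrin type at the four corners of $D$; the sign of $u$ on $S$ is extracted from the Bernoulli condition. \emph{Step 1 (the equation and $(i)$ inside $D$).} Differentiating $\Delta\psi+\omega(\psi)=0$ in $x$ gives the uniformly elliptic equation $\Delta u+c\,u=0$ in $D$, with bounded coefficient $c:=\omega'(\psi)$. The boundary relations force $u\equiv0$ on the bottom $\{y=0\}$ (where $\psi\equiv-m$) and $u\equiv0$ on the vertical sides $\{x=0\}$ and $\{x=\Lambda/2\}$ by~(\ref{Au3a}); by hypothesis $u\ge0$ and $u\not\equiv0$ in $D$. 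Since $\Delta u-c^-u=-c^+u\le0$, with $c^\pm$ the positive and negative parts of $c$, the function $u$ is a nonnegative supersolution of $\Delta-c^-$, an operator with nonpositive zeroth order term, so the strong maximum principle yields $u>0$ in $D$. The same reduction removes the sign ambiguity of $c$ from every boundary point lemma used below.

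\emph{Step 2 (assertion $(i)$ on $S$).} Suppose $u(P)=0$ for some $P\in S$. As $\psi\equiv0$ on $S$ and $\nabla\psi(P)\ne0$ by~(\ref{J27ba}), the vector $\nabla\psi(P)=(0,\psi_y(P))$ is a nonzero vertical normal to $S$ at $P$, so $S$ has a horizontal tangent at $P$. Differentiating the Bernoulli relation $\tfrac12|\nabla\psi|^2+y=Q+h$ along $S$ at $P$, where the unit tangent is $(\pm1,0)$, every term but $\psi_y\psi_{xy}$ vanishes, so $\psi_y(P)\psi_{xy}(P)=0$ and hence $u_y(P)=\psi_{xy}(P)=0$. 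But $S$ is a $C^2$ arc with the interior sphere property in $D$, and $u>0$ in $D$, $u(P)=0$, so Hopf's lemma gives $\partial_\nu u(P)<0$; as $\nu$ is vertical this reads $\pm u_y(P)<0$, contradicting $u_y(P)=0$. Hence $u>0$ on $S$ as well, which proves $(i)$.

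\emph{Step 3 (assertions $(ii)$ and $(iii)$).} At an interior point of a straight face Hopf's lemma applies directly: on $\{x=0\}$ with $0<y<y_0$, from $u>0$ in $D$ and $u(0,y)=0$ we get $-u_x(0,y)=\partial_\nu u(0,y)<0$; likewise $u_x(\Lambda/2,y)<0$ for $0<y<y_1$ and $u_y(x,0)>0$ for $0<x<\Lambda/2$. The corners $(0,y_0),(\Lambda/2,y_1)$ (for $(ii)$) and $(0,0),(\Lambda/2,0)$ (for $(iii)$) are handled by a corner point lemma. At each of them $\partial D$ is the union of two $C^2$ arcs meeting at a \emph{right} angle: the bottom is orthogonal to each vertical side, while $\mathcal S$ meets each vertical side orthogonally because the $x$-symmetry of $\mathcal D$, together with the fact that $\mathcal S$ does not intersect itself, forces $\mathcal S$ to have a horizontal tangent at $(0,y_0)$ and at $(\Lambda/2,y_1)$. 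At $(0,y_0)$ one face, $\{x=0\}$, carries $u\equiv0$ while, by Step 2, $u>0$ on the other face $S$ near the corner; the corner point lemma therefore gives $u_x(0,y_0)>0$, and after $x\mapsto\Lambda-x$ also $u_x(\Lambda/2,y_1)<0$, completing $(ii)$. At $(0,0)$, instead, $u\equiv0$ on \emph{both} faces, so $u$ and $\nabla u$ vanish there (and $u_{xx}(0,0)=u_{yy}(0,0)=0$ by differentiating $u\equiv0$ along each face), and the corner point lemma in its form concerning the leading mixed second order term gives $u_{xy}(0,0)>0$; the corner $(\Lambda/2,0)$ is identical after $x\mapsto\Lambda-x$, which reverses the sign and yields $u_{xy}(\Lambda/2,0)<0$.

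The point that needs the most care is Step 3: one must invoke the corner point lemma with exactly the right-angle geometry, in its ordinary form at the two top corners (where $u$ vanishes on one face only) and in its degenerate, second-order form at the two bottom corners, and one must justify that $\mathcal S$ meets the vertical symmetry lines orthogonally and with enough regularity. A clarifying device is to work in the full strip $\mathcal D$ rather than in $D$: since $\psi$ is even in $x$, the function $u=\psi_x$ is odd in $x$ and still solves $\Delta u+cu=0$, the point $(0,y_0)$ is then an ordinary $C^2$ boundary point of $\mathcal D$ and $(0,0)$ a flat bottom point, so the corner estimates become Hopf-type estimates for this odd extension. Everything else is the standard machinery of the maximum principle, so no further difficulty is expected.
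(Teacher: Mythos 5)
Your Steps 1 and 2 are sound and essentially reproduce the paper's argument (interior positivity by the strong maximum principle after the $c^{-}$ reduction, and positivity on $S$ by combining Hopf's lemma with the tangential differentiation of the Bernoulli relation, which is the same mechanism as the Robin condition (\ref{Au3b}) in the paper). Your treatment of the bottom corners in (iii) via Serrin's corner lemma (using $u=\nabla u=0$, $u_{xx}=u_{yy}=0$ there, so the dichotomy forces $u_{xy}>0$) is a correct alternative to the paper's route, which instead derives the asymptotics $\psi_x=C_0xy+o(r^2)$ and excludes $C_0=0$ by the sign of higher-order harmonic polynomials plus unique continuation.

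The genuine gap is at the top corners $(0,y_0)$ and $(\Lambda/2,y_1)$ in (ii). A Serrin-type corner lemma does not ``give $u_x(0,y_0)>0$'' just because $u\equiv 0$ on one face and $u>0$ on the other: its conclusion is the dichotomy that, for a direction $s$ entering the corner \emph{non-tangentially}, either $\partial_s u>0$ or $\partial_s^2 u>0$. The direction $(1,0)$ you need is tangent to $S$ at the corner, so it is not even admissible, and for an admissible direction $s=(s_1,s_2)$ the first alternative reads $s_1u_x(0,y_0)>0$ (since $u_y(0,y_0)=0$), while the second alternative reduces, using $u_{yy}(0,y_0)=0$ and $u_{xx}(0,y_0)=0$ (from the equation, since $u(0,y_0)=0$), to a sign condition on $u_{xy}(0,y_0)$ which contradicts nothing you have established; the positivity of $u$ on $S$ near the corner is compatible with $u_x(0,y_0)=0$ at this level of expansion. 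To close this case one must bring in the free-surface boundary condition near the corner: either, as in the paper, derive the Robin condition (\ref{Au3b}) from the Bernoulli equation on the locally horizontal piece $y=f(x)$, obtain the corner asymptotics $\psi_x=cx+O(r^2)$ with $c\ge 0$, and exclude $c=0$ by the sign-changing harmonic polynomials together with unique continuation; or, alternatively, differentiate the Robin condition along $S$ at the corner to show that $u_x(0,y_0)=0$ would force $u_{xy}(0,y_0)=0$, contradicting the second alternative of the corner lemma. Your proposed ``clarifying device'' does not repair this: the odd extension of $u$ across $x=0$ changes sign in every interior ball of $\mathcal D$ touching $(0,y_0)$, so Hopf's lemma is not applicable there, and the point remains a genuine corner problem for the one-signed function on the half-period domain.
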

\begin{proof} Since
\begin{equation}\label{Au3ba}
\Delta\psi_x+\omega'(\psi)\psi_x=0\;\;\mbox{in $D$},
\end{equation}
the assertion (i) for the domain $D$ and the inequalities
\begin{eqnarray*}
&&\psi_{xx}(0,y)>0\;\;\mbox{ for $0<y< y_0$},\;\; \psi_{xx}(\Lambda/2,y)<0\;\;\mbox{ for $0<y< y_1$} \\ &&\mbox{and}\;\;\psi_{xy}(x,0)>0\;\;\mbox{for}\;\;0<x<\Lambda/2
\end{eqnarray*}
follows from the maximum principle and Hopf lemma (see  \cite{CSst} and \cite{AN}). Let us prove that
\begin{equation}\label{Au3bb}
\psi_{xx}(0,y_0)>0\;\;\;\mbox{and}\;\; \psi_{xx}(\Lambda/2,y_1)<0.
\end{equation}
Consider the first inequality. The tangent to the boundary curve at the point $(0,y_0)$ is parallel to the $x$-axis and so the curve  near this point can be described by the equation $y=f(x)$ with smooth $f$ and with $f'(0)=0$. The Bernoulli equation on this piece of the curve has the form
$$
\frac{1}{2}|\nabla\psi|^2+f(x)-h=Q\;\;\mbox{for $y=f(x)$}.
$$
Differentiating this relation with respect to $x$ and using that $\psi(x,f(x))=0$, which implies $f'=-\psi_x/\psi/y$, we get
\begin{equation}\label{Au3b}
\partial_\nu(\psi_x)-\rho \psi_x=0\;\;\mbox{for $y=f(x)$}.
\end{equation}
Here $\nu$ is the outward unit normal and
  \begin{equation}\label{Au4a}
\rho=
\rho(x)=\frac{(1+\psi_x\psi_{xy}+\psi_y\psi_{yy})}{\psi_y(\psi_x^2+\psi_y^2)^{1/2}}\Big|_{y=f(x)}.
\end{equation}
By (\ref{Au3a}), (\ref{Au3ba}) and (\ref{Au3b}) we conclude that the asymptotics of $\psi_x$ near the point $(0,y_0)$ is $\psi(x,y)=cx+O(r^2)$, where $c\geq 0$ and $r^2=x^2+(y-y_0)^2$. If $c=0$ then
$$
\psi_x(x,y)=Cp_{n}(x,y-y_0)+O(r^{n+1}),
$$
where $p$ is a harmonic polynomial of two variables  $x$ and $y-y_0$ of order  $n$, $n>2$, which is even with respect to $y-y_0$ and vanishes for $x=0$. All such harmonic polynomials change sign inside $D$ and since $\psi_x\geq 0$ inside $D$, we have that $C=0$. This implies that $\psi$ has zero of infinite order at $(0,y_0)$. By unique continuation property we conclude that $\psi$ is identically zero. This contradiction proves that $\psi_x(x,y)=cx+O(r^2)$ with $c>0$. Similarly, we can prove the second inequality in (\ref{Au3bb}). Thus (ii) is proved.

Let us prove (i) for points on $S$. Denote by $K$ the set of points $(x,y)\in \overline{S}$ for which $\psi_y(x,y)=0$. Clearly $K$ is compact and
$$
\psi_x(x,y)^2\geq c_*;=\min_{(x,y)\in \overline{S}}|\nabla\psi|^2.
$$
The right-hand side here is positive by (\ref{J27ba}).
If $\psi_y(x_*,y_*)\neq 0$ then there is an interval $(x_*-\varepsilon,x_*+\varepsilon)$, $\varepsilon>0$, where
the curve can be parameterized by $y=f(x)$ near the point $(x_*,y_*)$. Differentiating equation $\psi(x,f(x))=0$ with respect to $x$ we get the relation (\ref{Au3b}).
Then if $u(x,y)=0$ on this curve at a certain point $(x,y)$, then $u_\nu(x,y)=0$ by (\ref{Au3b}). On the other hand by the Hopf lemma $u_\nu$ must be different from zero. This contradiction proves (i) for $S$. So the assertions (i) and (ii) are proved.

Let us prove (iii). From (\ref{Au3ba}) and (\ref{Au3a}) it follows that
\begin{equation}\label{Au14a}
\psi_x(x,y)=C_0xy+o(r_0^2)\;\;\psi_x(x,y)=C_1(x-\Lambda/2)y+o(r_1^2),
\end{equation}
where $r_0$ and $r_1$ are the distances to $(0,0)$ and $(\Lambda/2,0)$, and $C_0\geq 0$, $C_1 \leq 0$ respectively. Let $C_0=0$. The the asymptotics has the form
$$
\psi_x(x,y)=Cp_n(x,y)+o(r_0^n),
$$
where $p_n$ is a harmonic polynomial of degree $n>2$ vanishing for $x=0$ and for $y=0$. Such polynomials change sign in $D$ and therefore $C=0$, which leads to $\psi_x(x,y)=O(r_0^n)$ for any $n$. This implies $\psi_x=0$. This contradiction proves that $C_0>0$. Similarly one can prove that $C_1<0$.
\end{proof}

\begin{remark} We note that the assertion {\rm (iii)} in the above lemma is equivalent to the  asymptotics (\ref{Au14a}) for $\psi_x$ near the points $(0,0)$ and $(\Lambda/2,0)$,
where  $C_0>0$, $C_1<0$ respectively.
\end{remark}

\begin{corollary} Let $\psi$ be the same as in the previous lemma. Let also on a piece of the curve $S$, $\psi_y\neq 0$. Then this piece can be parameterized as $y=f(x)$. Moreover  $f'(x)\neq 0$ on this part of the curve and it has the same sign as $-\psi_y$.

\end{corollary}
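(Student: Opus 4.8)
The plan is to combine the implicit function theorem with assertion (i) of Lemma~\ref{L1}. Since $\psi$ vanishes identically on $\mathcal S$, its gradient $\nabla\psi=(\psi_x,\psi_y)$ is everywhere normal to the curve; in particular, at a point of $S$ where $\psi_y\neq 0$ the level set $\{\psi=0\}$ is, by the implicit function theorem, locally the graph of a function $y=f(x)$ which is as smooth as $\psi$ allows (of class $C^{3}$, since $\psi\in C^{3}(\overline{\mathcal D})$). Thus locally the piece of $S$ in question is $\{y=f(x)\}$, and it remains to upgrade this to a parameterization over the whole connected piece and to determine the sign of $f'$.

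For the sign I would differentiate the identity $\psi(x,f(x))=0$ in $x$, which gives $\psi_x+\psi_y f'=0$, i.e.
$$
f'(x)=-\frac{\psi_x(x,f(x))}{\psi_y(x,f(x))}.
$$
By part (i) of Lemma~\ref{L1}, $\psi_x>0$ on $D\cup S$, hence on this piece of $S$ as well. Therefore $f'(x)\neq 0$ everywhere on the piece, and its sign equals that of $-1/\psi_y(x,f(x))$, which is exactly the sign of $-\psi_y(x,f(x))$. This is the assertion of the corollary.

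The one point that needs a little care — and essentially the only obstacle — is that the graph representation should hold over the whole connected piece, not merely locally. Here I would use the parametric form $x=u(s)$, $y=v(s)$ of $\mathcal S$ together with the standing assumption $|u'|+|v'|\neq 0$. Since $(u',v')$ is tangent to $\mathcal S$ and $\nabla\psi$ is normal to it, the orthogonality relation $\psi_x u'+\psi_y v'=0$ holds along $S$; if $u'(s_0)=0$ at some point of the piece, then $v'(s_0)\neq 0$ and this relation forces $\psi_y=0$ there, contradicting the hypothesis $\psi_y\neq 0$ on the piece. Hence $u'\neq 0$ throughout, so $s\mapsto u(s)=x$ is strictly monotone on the piece; the curve is therefore a genuine single-valued graph $y=f(x)$ over the corresponding $x$-interval, with $f'(x)=v'(s)/u'(s)$, and feeding this into the computation above (or observing directly from $\psi_x u'+\psi_y v'=0$ and $\psi_x>0$ that $v'/u'$ has the sign of $-\psi_y$) completes the proof.
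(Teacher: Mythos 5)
Your proposal is correct and follows essentially the same route as the paper: represent the piece of $S$ as a graph $y=f(x)$ where $\psi_y\neq 0$, differentiate $\psi(x,f(x))=\mathrm{const}$ to get $f'=-\psi_x/\psi_y$, and invoke Lemma~\ref{L1}(i) ($\psi_x>0$ on $D\cup S$) to conclude $f'\neq 0$ with the sign of $-\psi_y$. The only difference is that you spell out, via the parametrization $(u(s),v(s))$ and the orthogonality $\psi_x u'+\psi_y v'=0$, why the graph representation holds on the whole connected piece — a detail the paper leaves implicit in the remark that the tangent is $(-\psi_y,\psi_x)$.
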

\begin{proof} Since the tangent to the curve is $(-\psi_y,\psi_x)$ it can be parameterized as $y=f(x)$ on any interval where $\psi_y\neq 0$.
Differentiating the relation $\psi(x,f(x))=\mbox{const}$, we get $f'(x)=-\psi_x/\psi_y$. Therefore $f'\neq 0$ and it has the same sign as $-\psi_y$.

\end{proof}

The boundary of $D$ consists of four open segments
$$
L_1=\{0\}\times (0,\eta(0)),\;\;L_2=(0,\Lambda/2)\times\{0\},\;\;L_3=\{\Lambda/2\}\times (0,\eta(\Lambda/2)),\;\;,\;\;L_4=(x,\eta(x)),x\in (0,\Lambda/2),
$$
and four points
$$
A_1=(0,0),\;\;A_2=(\Lambda/2,0),\;\;A_3=(\Lambda/2,\eta(\Lambda/2)),\;\;A_4=(0,\eta(0)).
$$
So
$$
\partial D=\bigcup_{j=1}^4L_j\cup\bigcup_{k=1}^4A_k.
$$

Introduce the domain
$$
R=\{(X,Y)\,:\, 0<X<\Lambda/2,\;a<Y<b\}.
$$
Similarly, we split the boundary of $R$. Let
$$
M_1=\{0\}\times (a,b),\;\;M_2=(0,\Lambda/2)\times\{a\},\;\;M_3=\{\Lambda/2\}\times (a,b)\;\;M_4=(0,\Lambda/2)\times\{b\}
$$
and four points
$$
B_1=(0,a),\;\;B_2=(\Lambda/2,a),\;\;A_3=(\Lambda/2,b),\;\;B_4=(0,b).
$$
Then
$$
\partial R=\bigcup_{j=1}^4M_j\cup\bigcup_{k=1}^4B_k.
$$

Consider a map
$$
F=(F_1,F_2)\,:\,\overline{R}\rightarrow \overline{D}.
$$
We assume that this map is  of class $C^2$  and
\begin{equation}\label{Au6ba}
F(M_j)=L_j\;\;\mbox{and}\;\;F(B_j)=A_j,\;\;j=1,2,3,4.
\end{equation}
Moreover we assume that this map is isomorphism, i.e.
\begin{equation}\label{Au6a}
\frac{\partial x}{\partial X}\frac{\partial y}{\partial Y}-\frac{\partial x}{\partial Y}\frac{\partial y}{\partial X}\neq 0.
\end{equation}

\begin{lemma}\label{L2} Let $u\in C^2(\overline{D})$ satisfy
\begin{equation}\label{Au6aa}
u=0\;\;\mbox{on $\bigcup_{j=1}^3L_j$},
\end{equation}
and properties {\rm (i)-(iii)} from {\rm Lemma \ref{L1}}.
Then the function $v(X,Y)=u(F(X,Y))$ also belongs to $C^2(\overline{\mathcal R})$  and satisfies
\begin{equation}\label{Au6b}
v=0\;\;\mbox{on $\bigcup_{j=1}^3M_j$}
\end{equation}
together with properties {\rm (i)-(iii)} from the same {\rm Lemma}.
\end{lemma}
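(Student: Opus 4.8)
The plan is to work throughout with the composition $v=u\circ F$ and to reduce every assertion to a property of $u$ on $D$ (supplied by Lemma \ref{L1}) combined with the boundary correspondence (\ref{Au6ba}). That $v\in C^2(\overline R)$ is immediate from the chain rule, since $u\in C^2(\overline D)$ and $F\in C^2(\overline R)$; and $v$ vanishes on $M_1\cup M_2\cup M_3$ because $F$ sends these arcs onto $L_1\cup L_2\cup L_3$, on which $u\equiv0$ by (\ref{Au6aa}). For part (i), the condition (\ref{Au6a}) makes $F$ a local diffeomorphism, hence an open map, so $F(R)$ is an open subset of $F(\overline R)\subseteq\overline D$; an open subset of $\overline D$ cannot meet $\partial D$, so $F(R)\subseteq D$ and $v=u\circ F>0$ on $R$, while $F(M_4)=L_4=S$ gives $v>0$ on $M_4$.

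For part (ii) I would differentiate, $v_X=(u_x\circ F)F_{1,X}+(u_y\circ F)F_{2,X}$ and $v_Y=(u_x\circ F)F_{1,Y}+(u_y\circ F)F_{2,Y}$, and inspect each arc. On $\overline{M_1}$ the image of $F$ lies on the line $\{x=0\}$, so $F_1\equiv0$ and $F_{1,Y}\equiv0$ there; then (\ref{Au6a}) forces $F_{1,X}\neq0$ on $\overline{M_1}$, and since $F_1\ge0$ on $\overline R$ and vanishes on $\overline{M_1}$ (because $D\subseteq\{0<x<\Lambda/2\}$), in fact $F_{1,X}>0$ on $\overline{M_1}$. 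Also $u\equiv0$ on the whole segment $\overline{L_1}$ forces $(u_y\circ F)\equiv0$ on $\overline{M_1}$, so $v_X=(u_x\circ F)F_{1,X}$ there; since $F$ carries $\overline{M_1}\setminus\{B_1\}=M_1\cup\{B_4\}$ into $\{0\}\times(0,y_0]$, Lemma \ref{L1}(ii) gives $u_x\circ F>0$, whence $v_X>0$. The arc $M_3$ is symmetric, giving $F_{1,X}>0$ but $u_x\circ F<0$, hence $v_X<0$ on $\overline{M_3}\setminus\{B_2\}$. On $\overline{M_2}$ the image of $F$ lies on $\{y=0\}$, so $F_2\equiv0$ and $F_{2,X}\equiv0$, while (\ref{Au6a}) together with $F_2\ge0$ yields $F_{2,Y}>0$; since $u\equiv0$ on $\overline{L_2}$ kills $u_x\circ F$, we get $v_Y=(u_y\circ F)F_{2,Y}$, and $F(M_2)=L_2$ puts the first coordinate of $F$ in $(0,\Lambda/2)$, so Lemma \ref{L1}(ii) gives $u_y\circ F>0$ and hence $v_Y>0$ on $M_2$.

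For part (iii) at $B_1$: near $A_1$ the two arcs $L_1,L_2$ on which $u$ vanishes lie on the coordinate axes, so $u$ together with $u_x,u_y,u_{xx},u_{yy}$ all vanish at $A_1$, while $u_{xy}(A_1)>0$ by Lemma \ref{L1}(iii). Expanding $v_{XY}$ by the second-order chain rule and evaluating at $B_1$, every term carrying a factor $u_x$, $u_y$, $u_{xx}$ or $u_{yy}$ drops out, leaving $v_{XY}(B_1)=u_{xy}(A_1)\bigl(F_{1,X}F_{2,Y}+F_{1,Y}F_{2,X}\bigr)\big|_{B_1}$; but $F_{1,Y}(B_1)=0$ and $F_{2,X}(B_1)=0$ because $F$ maps $\overline{M_1}$ into $\{x=0\}$ and $\overline{M_2}$ into $\{y=0\}$, so $v_{XY}(B_1)=u_{xy}(A_1)\,F_{1,X}(B_1)\,F_{2,Y}(B_1)>0$ by the signs found in the previous step. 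The computation at $B_2$ is verbatim and produces $v_{XY}(B_2)=u_{xy}(A_2)\,F_{1,X}(B_2)\,F_{2,Y}(B_2)<0$ since $u_{xy}(A_2)<0$.

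The argument is elementary; the delicate part is bookkeeping. One has to identify, arc by arc, which tangential derivative of $F_1$ or $F_2$ vanishes and then read off the sign of the surviving transverse derivative from (\ref{Au6a}) together with the one-sided bounds $0\le F_1\le\Lambda/2$ and $F_2\ge0$ (with equality precisely on the relevant arc), which come from $D\subseteq\{0<x<\Lambda/2,\ y>0\}$; and for (i) one needs the open-map argument placing $F(R)$ inside $D$, the arcs themselves being located directly by (\ref{Au6ba}). I expect the second-order chain rule in the proof of (iii) to require the most care, even though it collapses to a single product once the vanishing of $u,u_x,u_y,u_{xx},u_{yy}$ at the two bottom corners is recorded.
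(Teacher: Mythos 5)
Your argument is correct and follows essentially the same route as the paper: chain-rule bookkeeping arc by arc, using the vanishing of the tangential derivatives of $u$ on the images $L_j$, the non-vanishing Jacobian (\ref{Au6a}) together with one-sided sign constraints to fix the signs of the surviving derivatives of $F$, and the collapse of the second-order chain rule at the two bottom corners. Your write-up is in fact somewhat more careful than the paper's (e.g. the explicit open-mapping step for (i) and recording $u_{xx}=u_{yy}=0$ at $A_1,A_2$, where the paper instead kills those terms via $\partial_Yx=\partial_Xy=0$ at the corners), but the method is the same.
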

\begin{proof}  The relation (\ref{Au6b}) follows from (\ref{Au6ba}) and the property (i) can be verified directly.

Let us turn to (ii). Differentiating $v$, we get
$$
\partial_Xv=u_x\frac{\partial x}{\partial X}+u_y\frac{\partial y}{\partial X},\;\;\partial_Yv=u_x\frac{\partial x}{\partial Y}+u_y\frac{\partial y}{\partial Y}.
$$
The function  $u_x$ is equal to zero on $L_1$. Therefore $v_Y=u_y \frac{\partial y}{\partial X}$. Since the Jacobian of $F$ does not vanish, $\frac{\partial x}{\partial Y}\frac{\partial y}{\partial X}\neq 0$ and hence $\frac{\partial y}{\partial X}\neq 0$. Noting that $y(0,a)=0$ and $y(0,b)=\eta(0)$ we conclude that $\frac{\partial y}{\partial X}>0$. Hence $v_Y>0$ on $M_1\cup \{(0,b)\}$.
Similarly one can prove the remaining part of (ii).

To prove (iii) we start from noting that $\nabla v=0$ at the points $(0,a)$ and $(\Lambda/2,a)$. Using this observation, we get
\begin{eqnarray*}
&&v_{XY}=\partial_X(u_x\frac{\partial x}{\partial Y}+u_y\frac{\partial y}{\partial Y})=
(u_{xx}\frac{\partial x}{\partial X}+u_{xy}\frac{\partial y}{\partial X})\frac{\partial x}{\partial Y}+
(u_{yx}\frac{\partial x}{\partial X}+u_{yy}\frac{\partial y}{\partial X})\frac{\partial y}{\partial Y}\\
&&=
u_{xy}\frac{\partial y}{\partial X}\frac{\partial x}{\partial Y}+u_{yx}\frac{\partial x}{\partial X}\frac{\partial y}{\partial Y}.
\end{eqnarray*}

Due to (\ref{Au6ba}), $F_2(M_2)=0$ and $F_1(M_1)=0$. Therefore, $\partial_Xy=\partial_Yx=0$ at $A_1$. Hence
$v_{XY}=u_{xy}\frac{\partial Y}{\partial y}\frac{\partial X}{\partial x}$ at $A_1$. Using (\ref{Au6a}) one can show that $v_{XY}>0$ at $A_1$.

Similar argument proves (iii) at $A_2$.

\end{proof}

\begin{remark} We can assume that $v\in C^2(\overline{R})$ satisfies {\rm (\ref{Au6b})} and {\rm (\ref{Au16a}) (i)-(iii)}. Then the same proof
gives that $u(x,y)=v(X(x,y),Y(x,y))$ satisfies  {\rm (\ref{Au6aa})} together with {\rm (\ref{Au16a}) (i)-(iii)}.

\end{remark}

In the next proposition a stability property of the properties (i)-(iii), (\ref{Au16a}) is verified.

\begin{proposition}\label{PAu17a} Let $u\in C^2(\overline{D})$ satisfy
\begin{equation}\label{Au16b}
u=0\;\;\mbox{on $\bigcup_{j=1}^3L_j$}
\end{equation}
 and  {\rm (i)-(iii)} in {\rm (\ref{Au16a})}, be valid.
Then there exist $\varepsilon>0$ such that if $||u-v||_{C^2(\overline{D})}\leq\varepsilon$ and {\rm (\ref{Au16b})} holds then $v$ also satisfies {\rm (i)-(iii), (\ref{Au16a})}.
\end{proposition}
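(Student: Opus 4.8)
The plan is to transport the whole question to the model rectangle $R$, where all four corners are right angles and, near each boundary segment, the domain lies on one definite side of a coordinate line; there the stability becomes a straightforward consequence of the fundamental theorem of calculus together with the definition of the $C^2$-norm.

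First the reduction. Write $w=u\circ F$. By Lemma \ref{L2}, $w\in C^2(\overline R)$ vanishes on $M_1\cup M_2\cup M_3$ and satisfies {\rm (i)--(iii)} on $\overline R$. If $v\in C^2(\overline D)$ vanishes on $L_1\cup L_2\cup L_3$ and $\|u-v\|_{C^2(\overline D)}\le\varepsilon$, then $\widetilde w:=v\circ F$ vanishes on $M_1\cup M_2\cup M_3$ and, since $F\in C^2$, the chain rule gives $\|w-\widetilde w\|_{C^2(\overline R)}\le C_F\varepsilon$ with $C_F$ depending only on the first two derivatives of $F$. Conversely, by the Remark following Lemma \ref{L2} (and because $F^{-1}\in C^2$ by (\ref{Au6a})), once $\widetilde w$ is known to satisfy {\rm (i)--(iii)} on $\overline R$ the function $v=\widetilde w\circ F^{-1}$ satisfies {\rm (i)--(iii)} on $\overline D$. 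Hence it suffices to prove the rectangle version: if $w\in C^2(\overline R)$ vanishes on $M_1\cup M_2\cup M_3$ and satisfies {\rm (i)--(iii)}, then there is $\varepsilon_R>0$ such that every $\widetilde w\in C^2(\overline R)$ vanishing on $M_1\cup M_2\cup M_3$ with $\|w-\widetilde w\|_{C^2(\overline R)}\le\varepsilon_R$ also satisfies {\rm (i)--(iii)}.

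On $\overline R$ the argument is local and uniform. The set $\Gamma=\overline R\setminus(R\cup M_4)=M_1\cup M_2\cup M_3\cup\{B_1,B_2,B_3,B_4\}$ is compact; I cover it by finitely many neighbourhoods $U$ in $\overline R$, of four types, and in each produce $\varepsilon_U>0$ so that $\|w-\widetilde w\|_{C^2}\le\varepsilon_U$ forces $\widetilde w>0$ on $U\cap(R\cup M_4)$, the part of {\rm (ii)} attached to $U\cap(M_1\cup M_2\cup M_3)$, and {\rm (iii)} at $U\cap\{B_1,B_2\}$. The mechanism is always the same: by {\rm (ii)}/{\rm (iii)} and continuity some first or mixed second derivative of $w$ is $\ge c>0$ on a compact neighbourhood of the point in question; if $\varepsilon_U$ is small the corresponding derivative of $\widetilde w$ is $\ge c/2$ there; and since $\widetilde w$ vanishes on the adjacent walls among $M_1,M_2,M_3$, integrating that lower bound along horizontal or vertical segments — which stay inside the rectangle $\overline R$, this being the whole point of working on $R$ — produces the strict positivity of $\widetilde w$ and of its one-sided derivatives. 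Concretely: near an interior point of $M_1$ or $M_3$ integrate $\widetilde w_X$ from the vertical wall; near an interior point of $M_2$ integrate $\widetilde w_Y$ from the bottom; near $B_1$ or $B_2$, where $\widetilde w=0$ on two adjacent sides, combine $\widetilde w_X\equiv0$ on $M_2$, $\widetilde w_Y\equiv0$ on $M_1$ and $\widetilde w_{XY}\ge c/2$ to get first a linear-in-one-variable lower bound for a gradient component and then a bilinear lower bound for $\widetilde w$ itself, which yields {\rm (i)}, {\rm (ii)} and {\rm (iii)} simultaneously; near $B_3$ or $B_4$ integrate $\widetilde w_X$ from the adjacent vertical wall, which is legitimate precisely because {\rm (ii)} is assumed up to and including the top corner. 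The derivative inequalities in {\rm (ii)}--{\rm (iii)} for $\widetilde w$ are then immediate from $C^2$-closeness.

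It remains to assemble. Extract a finite subcover $\Gamma\subset\bigcup_{k=1}^N U_k$ and set $K=\overline R\setminus\bigcup_{k=1}^N U_k$, a compact subset of $R\cup M_4$ on which $w\ge\delta>0$ by {\rm (i)}; then $\|w-\widetilde w\|_{C^2}\le\delta/2$ gives $\widetilde w>0$ on $K$ as well. Taking $\varepsilon_R=\min\{\delta/2,\varepsilon_{U_1},\dots,\varepsilon_{U_N}\}$ settles the rectangle case and $\varepsilon=\varepsilon_R/C_F$ the original statement. The one step that is not purely mechanical is the behaviour at the top corners $B_3,B_4$ (equivalently $A_3,A_4$): there $\widetilde w$ is prescribed to vanish on only one of the three walls, so positivity near the corner must be squeezed out of the single transversality inequality in {\rm (ii)} valid up to the corner itself. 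This is exactly the step that would be awkward in the original variables, since a horizontal segment from $L_1$ into $D$ may leave $D$ near the curved free surface $S$, and it is the reason for passing to the rectangle first.
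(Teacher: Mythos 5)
Your proof is correct, but it is not the route the paper takes, and the comparison is instructive. The paper proves Proposition \ref{PAu17a} directly in $\overline D$ in a few lines: the strict inequalities (ii)--(iii) of (\ref{Au16a}) survive a small $C^2$ perturbation; since $v$ still vanishes on $L_1\cup L_2\cup L_3$, these inequalities force $v>0$ in a neighbourhood of $L_1\cup L_2\cup L_3$ inside $D$; and on the remaining compact part of $D\cup L_4$ positivity follows from the uniform lower bound $u\ge\mathrm{const}>0$ coming from (i). You instead conjugate by the map $F$ of Lemma \ref{L2} (and its Remark) and prove the stability statement on the rectangle $R$. What the detour buys is precisely the point you flag: on $R$ the horizontal and vertical segments along which you integrate the lower bounds on $\widetilde w_X$, $\widetilde w_Y$, $\widetilde w_{XY}$ never leave the domain, so the delicate places --- the bottom corners, where only the mixed second derivative is available, and the top corners, where $u$ itself degenerates and the interior bound $u\ge\delta$ gives nothing --- are handled cleanly; the paper's proof passes over this corner issue in silence, although it can also be repaired in the original variables (near $A_4$, say, first integrate along $S$ from the corner, where $\eta'(0)=0$ makes the tangential derivative comparable to $u_x(0,y_0)>0$, and only then integrate horizontally back to the wall or to the surface). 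What the detour costs is a hypothesis not literally in the statement: your argument needs the $C^2$ isomorphism $F$ with (\ref{Au6ba}), (\ref{Au6a}), which is a standing assumption of that part of Section 2 and exists in all the paper's applications --- indeed in the proof of Theorem \ref{TAu19a} the proposition is invoked for functions on $\overline R$, so your rectangle version is exactly what gets used --- but the proposition as stated concerns $\overline D$ alone, so strictly speaking your proof covers the case in which such an $F$ is available. Granting that, the reduction with the constant $C_F$, the finite covering, the corner estimates via $\widetilde w_{XY}\ge c/2$ and $\widetilde w_X=0$ on $M_2$, $\widetilde w_Y=0$ on $M_1$, and the final compactness step are all sound.
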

\begin{proof} Let the properties (i)-(iii), (\ref{Au16a}), are true for s certain $u\in C^2(\overline{D})$ together with (\ref{Au16b}). Then for small $\varepsilon$ and for the function $v$ satisfying (\ref{Au16b}) and $||u-v||_{C^2(\overline{D})}\leq\varepsilon$ all properties in (ii) and (iii) in (\ref{Au16a}) are satisfied. But then the function $v$ is positive in a neighborhood of $L_1\cup L_2\cup L_3$ inside $D$. Then due to (i) the function $v$ is also positive near $L_4$ provided $\varepsilon$ is small. This together with (i) for $D$ implies positivety of $v$ in $D$ for small $\varepsilon$.

\end{proof}

\subsection{Uniform stream solution, dispersion equation}\label{SAu13a}

The uniform stream solution $\psi=\Psi(y)$ with the bottom $y=0$ and the free surface $y=h$,  satisfies the problem
\begin{eqnarray}\label{XX1}
&&\Psi^{''}+\omega(\Psi)=0\;\;\mbox{on $(0,h)$},\nonumber\\
&&\Psi(h)=0,\;\;\Psi(0)=-m
\end{eqnarray}
and the the Bernoulli relation
\begin{equation}\label{J11a}
\frac{1}{2}\Psi'(h)^2=Q.
\end{equation}
To solve the problem (\ref{XX1}), (\ref{J11a}) we start from the following Cauchy problem
\begin{eqnarray}\label{XX1}
&&\Psi^{''}+\omega(\Psi)=0\;\;\mbox{on $(0,h)$},\nonumber\\
&&\Psi(h)=0,\;\;\Psi'(h)=\lambda.
\end{eqnarray}
This problem is uniquely solvable and the solution is  non trivial if $\lambda\neq 0$. Using this solution we can construct solution to (\ref{XX1}), (\ref{J11a}) with $m=-\Psi(0)$ and
$Q=\lambda^2/2$.

We assume that
\begin{equation}\label{J11aa}
\kappa:=\Psi'(h)\neq 0.
\end{equation}
Before introducing the dispersion equation we consider the eigenvalue problem
\begin{equation}\label{J11b}
-w^{''}-\omega'(\Psi)w=\mu w\;\;\mbox{on $(0,h)$},\;\;w(0)=w(h)=0.
\end{equation}
Denote by
$$
\mu_1<\mu_2<\cdots,\;\;\mu_j\to\infty,\;\;\mbox{as $j\to\infty$}
$$
the eigenvalues of the problem (\ref{J11b}), which are simple since we are dealing with one-dimensional problem. Introduce the function $\gamma(y;\tau)$  as solution of the equation
\begin{equation}\label{J16b}
-\gamma^{''}-\omega'(\Psi(y))\gamma+\tau^2\gamma=0,\;\;\mbox{on $(0,h)$}, \,\;\gamma(0;\tau)=0,\;\;\gamma(h;\tau)=1.
\end{equation}
This function is defined for all $\tau$ if $\mu_1>0$ and for $\tau\neq \sqrt{-\mu_j}$ for all $\mu_j\leq 0$.
The frequency of small amplitude Stokes waves is determined by the dispersion equation (see, for example, \cite{KN14} and \cite{Koz1}).
We put
\begin{equation}\label{Au12a}
\sigma(\tau)=\kappa(\gamma'(h;\tau)-\rho_0),
\end{equation}
where
\begin{equation}\label{F28a}
\rho_0=\frac{1+\Psi'(h)\Psi^{''}(h)}{\Psi'(h)^2}.
\end{equation}
The dispersion  equation   is the following
\begin{equation}\label{Okt6bb}
\sigma(\tau)=0.
\end{equation}

We note that
$$
\frac{1+\Psi'(h)\Psi^{''}(h)}{\Psi'(h)^2}=\kappa^{-2}-\frac{\omega(1)}{\kappa}
$$
and hence another form for (\ref{Au12a}) is
\begin{equation}\label{M21aa}
\sigma(\tau)=\kappa\gamma'(d,\tau)-\kappa^{-1}+\omega(1).
\end{equation}
Certainly the function $\sigma$ is defined for the same values of $\tau$ as the function $\gamma$.
The main properties of the function $\gamma$ are presented in the following

\begin{proposition}\label{Pr1} {\rm (i)}
\begin{equation}\label{J16a}
\partial_\tau\gamma'(h;\tau)>0\;\;\mbox{for all $\tau>0$ for which $\gamma$ is defined}.
\end{equation}

%(ii) If the problem (\ref{J11b}) has negative or zero eigenvalue (let $\lambda_n$, $n\geq 1$, be the least such eigenvalue) then the function %$\gamma(d;\tau)$ is increasing on each interval.

{\rm (ii)} If $\mu_1<0$ and $\tau^2 <-\mu_1$ then the function $\gamma(y;\tau)$ changes sign on the interval $y\in (0,h)$.

\end{proposition}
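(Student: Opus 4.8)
The plan is to prove the two assertions separately; each reduces to an elementary manipulation of the two--point problem (\ref{J16b}).

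For assertion (i), I would differentiate (\ref{J16b}) in the parameter $\tau$ and set $\dot\gamma=\partial_\tau\gamma$. Since the data $\gamma(0;\tau)=0$, $\gamma(h;\tau)=1$ are independent of $\tau$, the function $\dot\gamma$ solves
$$-\dot\gamma''-\omega'(\Psi)\dot\gamma+\tau^2\dot\gamma=-2\tau\gamma\quad\text{on }(0,h),\qquad \dot\gamma(0;\tau)=\dot\gamma(h;\tau)=0 ,$$
i.e.\ the same formally self--adjoint equation as in (\ref{J16b}) with homogeneous Dirichlet data and a right--hand side. Multiplying this equation by $\gamma$, the equation in (\ref{J16b}) by $\dot\gamma$, subtracting and integrating over $(0,h)$, the zeroth--order terms cancel and the two integrations by parts leave only the boundary term $\dot\gamma'(h;\tau)\gamma(h;\tau)=\dot\gamma'(h;\tau)$ at $y=h$ (at $y=0$ both $\gamma$ and $\dot\gamma$ vanish). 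Hence
$$\partial_\tau\gamma'(h;\tau)=\dot\gamma'(h;\tau)=2\tau\int_0^h\gamma(y;\tau)^2\,dy ,$$
which is strictly positive for every $\tau>0$ in the domain of definition of $\gamma$, since $\gamma(h;\tau)=1$ forces $\gamma\not\equiv0$. This is (\ref{J16a}).

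For assertion (ii), let $\mu_1<0$, let $\tau>0$ with $\tau^2<-\mu_1$, and assume $\gamma$ is defined at this $\tau$. Suppose, towards a contradiction, that $\gamma(\cdot;\tau)$ does not change sign on $(0,h)$. Because $\gamma(h;\tau)=1>0$ this forces $\gamma\ge0$ on $(0,h)$, and if $\gamma(y_*)=0$ at some interior $y_*$ then $y_*$ is a minimum, so $\gamma'(y_*)=0$ too, and uniqueness for the linear second--order equation in (\ref{J16b}) would give $\gamma\equiv0$, contradicting $\gamma(h;\tau)=1$; hence $\gamma>0$ on $(0,h)$. Let $w=w_1>0$ be the first eigenfunction of (\ref{J11b}), so that $w(0)=w(h)=0$ and $w'(h)<0$ (the zero of $w$ at $h$ is simple, or invoke the Hopf lemma). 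Combining the equation for $\gamma$ with $-w''-\omega'(\Psi)w=\mu_1w$ in the Lagrange--identity manner gives
$$\bigl(\gamma'w-w'\gamma\bigr)'=(\mu_1+\tau^2)\,\gamma w\quad\text{on }(0,h) ,$$
and integrating over $(0,h)$, using $w(0)=w(h)=0$, $\gamma(0;\tau)=0$, $\gamma(h;\tau)=1$, yields
$$-w'(h)=(\mu_1+\tau^2)\int_0^h\gamma(y;\tau)\,w(y)\,dy .$$
The left--hand side is strictly positive, while the right--hand side is strictly negative, since $\mu_1+\tau^2<0$ and the integrand is positive on $(0,h)$. This contradiction shows that $\gamma(\cdot;\tau)$ must change sign on $(0,h)$.

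The calculation in (i) is routine; the part requiring care is (ii), specifically the two facts that make the integral identity genuinely contradictory: a non--negative $\gamma$ cannot vanish inside $(0,h)$ without being identically zero (ODE uniqueness), and $w_1'(h)\ne0$ strictly. Granting these, the sign bookkeeping is immediate. An equivalent route to (ii) is Sturm's comparison theorem: for $\tau^2<-\mu_1$ the coefficient $\omega'(\Psi)-\tau^2$ in $\gamma''+(\omega'(\Psi)-\tau^2)\gamma=0$ strictly dominates $\omega'(\Psi)+\mu_1$ in $w_1''+(\omega'(\Psi)+\mu_1)w_1=0$, so $\gamma$ must vanish strictly between the consecutive zeros $0$ and $h$ of $w_1$, whence, as before, $\gamma(h;\tau)=1>0$ forces a sign change.
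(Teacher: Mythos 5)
Your proof is correct and, for part (ii), it is essentially the paper's own argument: the same Lagrange identity pairing $\gamma$ with the first eigenfunction $\phi_1$ (the paper writes it as $(\tau_1^2-\tau^2)\int_0^h\gamma\phi_1\,dy=\phi_1'(h)$, concludes $\int_0^h\gamma\phi_1\,dy<0$ and hence a sign change, whereas you run the same identity as a contradiction, with the extra—harmless—step ruling out an interior zero of a nonnegative $\gamma$). For part (i) the paper merely cites \cite{KN14}, Lemma 1.1; your differentiation-in-$\tau$ computation yielding $\partial_\tau\gamma'(h;\tau)=2\tau\int_0^h\gamma(y;\tau)^2\,dy$ is the standard proof of that cited fact and is correct.
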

\begin{proof} (i) This assertion is proved in \cite{KN14}, \ Lemma 1.1.

(ii) Let $\tau_1^2=-\mu_1$ and let $\phi_1$ be  a positive eigenfunction on $(0,h)$  corresponding to $\mu_1$. Multiplying the equation in  (\ref{J16b}) by $\phi_1$ and integrating over the interval $(0,h)$, we get
$$
(\tau_1^2-\tau^2)\int_{0}^h\gamma\phi_1dy=\phi_1'(h).
$$
Since $\phi_1'(h)<0$ and $\tau<\tau_1$ we get
$$
\int_{0}^h\gamma\phi_1dy<0.
$$
Since $\gamma(h;\tau)=1$ the function $\gamma$ must change sign on $(0,h)$.
\end{proof}
\begin{corollary}\label{CAu12a} If $\kappa>0$ then the function $\sigma'(\tau)>0$ for $\tau>0$. If $\kappa<0$ then $\sigma'(\tau)<0$ for $\tau>0$.

\end{corollary}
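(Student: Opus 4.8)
The plan is to observe that this corollary is an immediate consequence of Proposition~\ref{Pr1}~(i) together with the explicit form of $\sigma$. First I would recall from (\ref{Au12a}) that
$$
\sigma(\tau)=\kappa\bigl(\gamma'(h;\tau)-\rho_0\bigr),
$$
and note that the quantity $\rho_0$ defined in (\ref{F28a}) depends only on the uniform stream $\Psi$ (through $\Psi'(h)$ and $\Psi''(h)$) and hence is independent of $\tau$. Therefore $\sigma$ is differentiable in $\tau$ on the set where $\gamma(\cdot;\tau)$ is defined, and
$$
\sigma'(\tau)=\kappa\,\partial_\tau\gamma'(h;\tau).
$$

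Next I would invoke Proposition~\ref{Pr1}~(i), which asserts $\partial_\tau\gamma'(h;\tau)>0$ for all $\tau>0$ for which $\gamma$ is defined. Combining this with the displayed identity, the sign of $\sigma'(\tau)$ for $\tau>0$ coincides with the sign of $\kappa$: if $\kappa>0$ then $\sigma'(\tau)=\kappa\,\partial_\tau\gamma'(h;\tau)>0$, and if $\kappa<0$ then $\sigma'(\tau)<0$. This proves the corollary.

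There is essentially no obstacle here, since the real work is already contained in Proposition~\ref{Pr1}~(i); the only point that requires a moment's care is confirming that $\rho_0$ carries no $\tau$-dependence, so that differentiation of $\sigma$ in $\tau$ only hits the term $\gamma'(h;\tau)$. One could alternatively phrase the argument using the equivalent form (\ref{M21aa}), $\sigma(\tau)=\kappa\gamma'(h,\tau)-\kappa^{-1}+\omega(1)$, where the $\tau$-independence of the last two terms is even more transparent; differentiating gives the same conclusion.
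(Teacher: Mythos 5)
Your proposal is correct and follows exactly the paper's own argument: the paper likewise deduces the corollary directly from Proposition~\ref{Pr1}~(i) (inequality (\ref{J16a})) and the definition (\ref{Au12a}) of $\sigma$, so that $\sigma'(\tau)=\kappa\,\partial_\tau\gamma'(h;\tau)$ and its sign matches that of $\kappa$. Your extra remark that $\rho_0$ is $\tau$-independent is a correct and harmless elaboration of the same reasoning.
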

\begin{proof} The proof follows directly from (\ref{J16a}) and the definition of $\sigma$.

\end{proof}

The following properties of the function $\sigma$ are proved in \cite{KN14}

\begin{proposition}\label{Pr2} {\rm (i)}
$$
\sigma(\tau)=\kappa\tau +O(1)\;\;\mbox{as $\tau\to\infty$}.
$$

{\rm (ii)} Let $\tau_*^2$ be an eigenvalue of the problem (\ref{J11b}) and let $\phi_*(y)$ be the corresponding eigenfunction normalized by $||\phi_*||_{L^2(-h,0)}=1$. Then
$$
\sigma(\tau)=-\frac{\kappa\phi'_*(h)^2}{\tau^2-\tau_*^2}+O(1)\;\;\mbox{for $\tau$ close to $\tau_*$}.
$$
We note that due to the homogeneous Dirichlet boundary condition $\phi'_*(h)\neq 0$.
\end{proposition}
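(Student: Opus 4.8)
The plan is to reduce both statements to the structure, in the spectral parameter $\lambda:=\tau^2$, of the solution of the Cauchy problem underlying (\ref{J16b}). Introduce $\theta(y;\tau)$ solving
\[
-\theta''-\omega'(\Psi(y))\,\theta+\tau^2\theta=0 \ \text{ on } (0,h),\qquad \theta(0;\tau)=0,\quad \theta'(0;\tau)=1,
\]
which by standard ODE theory is, together with all its $y$-derivatives, an entire function of $\lambda$. Since $\gamma(\cdot;\tau)$ solves the same equation, vanishes at $y=0$ and equals $1$ at $y=h$, we have $\gamma(y;\tau)=\theta(y;\tau)/\theta(h;\tau)$ whenever $\theta(h;\tau)\neq0$, hence $\gamma'(h;\tau)=\theta'(h;\tau)/\theta(h;\tau)$. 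Note $\theta(h;\tau)=0$ precisely when $\theta(\cdot;\tau)$ is a Dirichlet eigenfunction, i.e. when $\tau^2=-\mu_j$ for some $j$ with $\mu_j\le0$; these are exactly the excluded values, and $\tau_*^2$ in (ii) is one of them. The whole analysis of $\sigma$ then follows from $\sigma(\tau)=\kappa\big(\gamma'(h;\tau)-\rho_0\big)$, since $-\kappa\rho_0$ is a harmless $O(1)$ constant.

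For (i) I would compare $\theta$ with the constant-coefficient model through the Volterra representation
\[
\theta(y;\tau)=\frac{\sinh(\tau y)}{\tau}-\int_0^y\frac{\sinh(\tau(y-t))}{\tau}\,\omega'(\Psi(t))\,\theta(t;\tau)\,dt .
\]
A Gronwall estimate, using $|\sinh(\tau(y-t))|\le e^{\tau(y-t)}$ and the boundedness of $\omega'(\Psi)$, gives for large $\tau$, uniformly on $[0,h]$, the bounds $|\theta(y;\tau)-\tau^{-1}\sinh(\tau y)|\le C\tau^{-2}e^{\tau y}$ and $|\theta'(y;\tau)-\cosh(\tau y)|\le C\tau^{-1}e^{\tau y}$. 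Dividing the two at $y=h$ yields $\gamma'(h;\tau)=\tau\coth(\tau h)+O(1)=\tau+O(1)$ (because $\tau\coth(\tau h)-\tau=O(\tau e^{-2\tau h})$), whence $\sigma(\tau)=\kappa\tau+O(1)$.

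For (ii), fix $\tau_*$ with $\lambda_*:=\tau_*^2=-\mu_j$. Since $\theta(\cdot;\tau_*)$ and $\phi_*$ both solve the homogeneous equation and vanish at $0$, they are proportional; matching derivatives at $0$ (note $\phi_*'(0)\neq0$ by ODE uniqueness) gives $\theta(y;\tau_*)=\phi_*(y)/\phi_*'(0)$, so $\theta'(h;\tau_*)=\phi_*'(h)/\phi_*'(0)\neq0$ while $\theta(h;\tau_*)=0$. Denoting $\dot\theta:=\partial_\lambda\theta$, one has $-\dot\theta''-\omega'(\Psi)\dot\theta+\tau_*^2\dot\theta=-\theta(\cdot;\tau_*)$ with $\dot\theta(0)=\dot\theta'(0)=0$; multiplying by $\theta(\cdot;\tau_*)$ and integrating by parts over $(0,h)$, all boundary terms drop except one because $\theta(h;\tau_*)=0=\dot\theta(0)=\dot\theta'(0)$, leaving the key identity
\[
\dot\theta(h;\tau_*)\,\theta'(h;\tau_*)=-\int_0^h\theta(y;\tau_*)^2\,dy=-\frac{1}{\phi_*'(0)^2},
\]
using $\|\phi_*\|_{L^2(0,h)}=1$. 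In particular $\dot\theta(h;\tau_*)\neq0$, so $\lambda\mapsto\theta(h;\lambda)$ has a simple zero at $\lambda_*$ and
\[
\gamma'(h;\tau)=\frac{\theta'(h;\tau)}{\theta(h;\tau)}
=\frac{\theta'(h;\tau_*)}{\dot\theta(h;\tau_*)}\,\frac{1}{\tau^2-\tau_*^2}+O(1)
=\frac{-\phi_*'(h)^2}{\tau^2-\tau_*^2}+O(1),
\]
the last step evaluating $\theta'(h;\tau_*)/\dot\theta(h;\tau_*)=\theta'(h;\tau_*)^2/\big(\dot\theta(h;\tau_*)\theta'(h;\tau_*)\big)=-\phi_*'(h)^2$ via the identity. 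Multiplying by $\kappa$ gives the stated expansion.

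The routine-but-delicate point is the uniform-in-$\tau$ Gronwall control in (i): one must carry the exponential weights $e^{\tau y}$ so that, after dividing by $\theta(h;\tau)\sim\tau^{-1}\sinh(\tau h)$, the remainder is genuinely $O(1)$, and the same for the derivative. In (ii) the only subtlety is simplicity of the zero of $\theta(h;\cdot)$, which the integration-by-parts identity settles immediately once $\theta'(h;\tau_*)\neq0$. (These facts are also recorded in \cite{KN14}.)
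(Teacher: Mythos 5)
Your proposal is correct, and it supplies something the paper itself does not: the paper simply cites \cite{KN14} for both assertions, so there is no internal proof to compare against. Your route --- passing to the Cauchy solution $\theta(\cdot;\tau)$ with $\theta(0)=0$, $\theta'(0)=1$, writing $\gamma'(h;\tau)=\theta'(h;\tau)/\theta(h;\tau)$, getting (i) from the Volterra representation with the exponential weight $e^{\tau y}$ carried through the Gronwall step, and getting (ii) from analyticity in $\lambda=\tau^2$ plus the integration-by-parts identity $\dot\theta(h;\tau_*)\,\theta'(h;\tau_*)=-\int_0^h\theta^2\,dy$, which yields simplicity of the zero of $\theta(h;\cdot)$ and the residue $-\phi_*'(h)^2$ --- is the standard Evans-function/Weyl-type computation and is essentially how such expansions are obtained in \cite{KN14}; all the individual steps check out (the weighted bounds do give $\theta'(h;\tau)/\theta(h;\tau)=\tau\coth(\tau h)+O(1)$, and the boundary terms in the identity vanish exactly as you say). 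Two cosmetic points worth noting: the proposition's phrase ``$\tau_*^2$ is an eigenvalue of (\ref{J11b})'' must be read, as you do, as $\tau_*^2=-\mu_j$ with $\mu_j\le 0$, i.e.\ the excluded values where $\gamma$ is singular (otherwise $\sigma$ has no pole there); and the normalization should be $\|\phi_*\|_{L^2(0,h)}=1$, the interval $(-h,0)$ in the statement being a slip inherited from the conventions of \cite{KN14}.
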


From Propositions \ref{Pr1}, \ref{Pr2} and Corollary \ref{CAu12a} we get
\begin{theorem}\label{TAu8} The following assertions are valid

{\rm (i)} Let $\mu_1>0$ then the dispersion equation {\rm (\ref{Okt6bb})} has a positive root if and only if $\sigma(0)<0$ in the case $\kappa>0$ and $\sigma(0)>0$ in the case $\kappa<0$. In both cases the function $\gamma(y;\tau)>0$ for $y\in (0,h]$.

{\rm (ii)} If $\mu_1\leq 0$ then the dispersion equation {\rm (\ref{Okt6bb})} is uniquely solvable. If $\tau_*$ is a root of this equation then the function $\gamma(y;\tau_*)$ does not change sign if and only if $\tau_*>-\mu_1$. %In this case equation (???) has exactly one root larger then $-\lambda_1$ and %$\gamma(y;\tau_*)>0$ for $y\in[0,d)$.

\end{theorem}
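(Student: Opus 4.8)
The plan is to deduce the whole statement from the behaviour of the single scalar function $\sigma$ on $(0,\infty)$, combining three facts quoted above: the strict monotonicity $\operatorname{sign}\sigma'(\tau)=\operatorname{sign}\kappa$ for $\tau>0$ (Corollary \ref{CAu12a}); the exact behaviour at the ends of each interval of definition, namely $\sigma(\tau)=\kappa\tau+O(1)$ as $\tau\to\infty$ and the simple pole $\sigma(\tau)=-\kappa\phi_*'(h)^2/(\tau^2-\tau_*^2)+O(1)$ with $\phi_*'(h)\ne0$ at each singular value $\tau_*=\sqrt{-\mu_j}$, $\mu_j\le0$ (Proposition \ref{Pr2}); and the sign dichotomy for $\gamma$. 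One half of the dichotomy is Proposition \ref{Pr1}(ii); the complementary half I would record first as a lemma: \emph{if $\tau^2>-\mu_1$ then $\gamma(\,\cdot\,;\tau)>0$ on $(0,h]$.} To see this, suppose $\gamma(\,\cdot\,;\tau)$ had an interior zero $y_1\in(0,h)$. As $\gamma(h;\tau)=1$, the solution of the linear ODE (\ref{J16b}) is not identically zero, hence it is not identically zero on $(0,y_1)$ either; so $\gamma|_{(0,y_1)}$ is a nontrivial Dirichlet eigenfunction of $-\partial_y^2-\omega'(\Psi)$ on $(0,y_1)$ for the eigenvalue $-\tau^2$. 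By monotonicity of Dirichlet eigenvalues under domain inclusion every such eigenvalue is $\ge\mu_1>-\tau^2$, a contradiction. Thus $\gamma(\,\cdot\,;\tau)$ has no interior zero and keeps the sign of $\gamma(h;\tau)=1$.

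Proof of (i). If $\mu_1>0$ there are no singular values, so $\sigma$ is continuous and strictly monotone on $[0,\infty)$ — increasing when $\kappa>0$, decreasing when $\kappa<0$ — and $\sigma(\tau)=\kappa\tau+O(1)\to(\operatorname{sign}\kappa)\cdot\infty$. Hence, for $\kappa>0$, $\sigma$ runs strictly from the finite value $\sigma(0)$ up to $+\infty$, so (\ref{Okt6bb}) has a positive root — then automatically unique and simple — iff $\sigma(0)<0$; for $\kappa<0$ it runs from $\sigma(0)$ down to $-\infty$, so a positive root exists iff $\sigma(0)>0$. The asserted positivity $\gamma(y;\tau)>0$ on $(0,h]$ holds for every $\tau\ge0$ by the lemma, since $\tau^2\ge0>-\mu_1$.

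Proof of (ii). If $\mu_1\le0$, the finitely many nonpositive eigenvalues $\mu_1<\cdots<\mu_k\le0$ produce the singular values $\tau_j=\sqrt{-\mu_j}$, of which $\tau_1=\sqrt{-\mu_1}$ is the largest; on $(\tau_1,\infty)$ there are none, so $\sigma$ is continuous and strictly monotone there. By Proposition \ref{Pr2}(i), $\sigma(\tau)\to(\operatorname{sign}\kappa)\cdot\infty$ as $\tau\to\infty$; by Proposition \ref{Pr2}(ii), since $\tau^2-\tau_1^2\to0^{+}$ as $\tau\to\tau_1^{+}$, we get $\sigma(\tau)\to-(\operatorname{sign}\kappa)\cdot\infty$ there. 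The two limits are opposite infinities and $\sigma$ is strictly monotone, so $\sigma$ has exactly one root $\tau_*\in(\tau_1,\infty)$, and it is simple. Because $\tau_*>\tau_1$, i.e. $\tau_*^2>-\mu_1$, the lemma yields $\gamma(\,\cdot\,;\tau_*)>0$ on $(0,h]$ — the "if" part of the sign criterion; the "only if" part is the contrapositive of Proposition \ref{Pr1}(ii) together with $\tau_*^2\ne-\mu_1$. Running the same "strict monotonicity plus opposite blow-ups" analysis on the remaining components — each bounded component $(\tau_{j+1},\tau_j)$ carries exactly one root, a component $(0,\tau_k)$ (present only when $0$ is not an eigenvalue) at most one — shows every other root lies in $\{\tau<\sqrt{-\mu_1}\}$, where $\gamma$ changes sign by Proposition \ref{Pr1}(ii). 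Hence $\tau_*$ is the unique root at which $\gamma$ keeps a constant sign, and, when $\mu_1=0$ (no bounded components and $(0,\tau_k)$ empty), the unique root of (\ref{Okt6bb}) altogether.

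The one genuinely delicate point is fixing the \emph{sign} of the blow-up of $\sigma$ at $\tau_1^{+}$ so that it is opposite to the sign of $\sigma$ at $+\infty$; this is precisely where the explicit residue $-\kappa\phi_*'(h)^2$ with $\phi_*'(h)\ne0$ from Proposition \ref{Pr2}(ii) is needed, rather than mere monotonicity. The only ingredient not already packaged in the quoted propositions is the positivity lemma for $\gamma$ in the range $\tau^2>-\mu_1$, whose proof rests on domain monotonicity of Dirichlet eigenvalues and unique continuation for a second-order linear ODE; I expect this to be the one step calling for a short independent argument.
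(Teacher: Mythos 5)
Your argument is correct and runs along the same route the paper intends: the paper's own proof of Theorem \ref{TAu8} is only a two-line appeal to the monotonicity of $\sigma$ (Corollary \ref{CAu12a}), with the end-behaviour of $\sigma$ from Proposition \ref{Pr2} and the sign-change statement of Proposition \ref{Pr1}(ii) cited implicitly in the sentence ``From Propositions \ref{Pr1}, \ref{Pr2} and Corollary \ref{CAu12a} we get''. What you supply beyond the paper is substantive. First, the positivity lemma --- $\gamma(\cdot;\tau)>0$ on $(0,h]$ whenever $\tau^2>-\mu_1$, proved via domain monotonicity of Dirichlet eigenvalues and ODE uniqueness --- is nowhere proved in the paper, which only establishes the opposite direction (sign change for $\tau^2<-\mu_1$, Proposition \ref{Pr1}(ii)); yet it is exactly what is needed both for the last sentence of (i) and for the ``if'' half of the sign criterion in (ii), so your short independent argument genuinely fills a gap rather than duplicating something. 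Second, your use of the residue sign in Proposition \ref{Pr2}(ii) to get $\sigma\to-(\operatorname{sign}\kappa)\infty$ as $\tau\to\sqrt{-\mu_1}^{\,+}$, opposite to the behaviour at $+\infty$, is the correct (and necessary) way to get existence of the root on $(\sqrt{-\mu_1},\infty)$; monotonicity alone, which is all the paper's proof mentions, does not give this. Third, your component-by-component count is accurate and exposes that the literal claim in (ii) that the dispersion equation ``is uniquely solvable'' fails when there are two or more nonpositive eigenvalues: each bounded component between consecutive singular values carries exactly one root. Uniqueness holds for the root with $\tau>\sqrt{-\mu_1}$ (equivalently, the unique root at which $\gamma$ keeps a constant sign), which is precisely the reading used in ``The choice of the frequency $\tau_*$'' following the theorem; likewise ``$\tau_*>-\mu_1$'' in the statement should be read as $\tau_*^2>-\mu_1$, as you do. So your proposal is not merely consistent with the paper's proof --- it is a corrected and completed version of it.
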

\begin{proof} (i) this follows from the monotonicity of the function $\sigma$ proved in Proposition \ref{CAu12a}.

(ii) This assertion also follows from Proposition \ref{CAu12a}.

\end{proof}

 {\bf The choice of the frequency $\tau_*$.} If $\mu_1>0$ we  assume that $\sigma(0)<0$ and denote by $\tau_*>0$ the unique root of the equation (\ref{Okt6bb}). If $\mu_1\leq 0$ then  $\tau_*>-\mu_1$ is the unique  root of the equation (\ref{Okt6bb}).  By $\lambda_*$ we denote the corresponding value of $\lambda$.

\subsection{Transevsality condition. Small amplitude water waves}\label{SAu14ba}

As a result of bifurcation from the trivial (horizontal) free surface, the small amplitude bifurcation surface and corresponding water domain is described by
\begin{equation}\label{Au21c}
{\mathcal D}={\mathcal D}_\eta=\{(x,y)\,:\,x\in\Bbb R,\;0<y<h+\eta(x)\},\;\;{\mathcal S}={\mathcal S}_\eta=\{(x,y)\;:\,x\in\Bbb R,\;y=h+\eta(x)\},
\end{equation}
where $\eta$ is even, $\Lambda$-periodic function, which is small together with the first derivative. Furthermore we will assume that the function $\omega$ is analytic.

We use the spaces
$$
C^{k,\alpha}_{e,\Lambda}(\Bbb R)=\{\xi\in C^{k,\alpha}(\Bbb R)\,:\,\mbox{$\xi$ is even and $\Lambda$-periodic}\},
$$
$$
C^{k,\alpha}_{e,\Lambda}({\mathcal D})=\{w\in C^{k,\alpha}({\mathcal D})\,:\,\mbox{$w$ is even and $\Lambda$-periodic}\}
$$
and
$$
\widehat{C}^{k,\alpha}_{e,\Lambda}({\mathcal D})=\{w\in C^{k,\alpha}_{e,\Lambda}({\mathcal D})\,:\,\mbox{$w$ vanishing on the bottom of
${\mathcal D}$}\},
$$
where $k=0,1,\ldots$, $\alpha\in (0,1)$. We will use also another strip-like domains ${\mathcal R}$ and the space $C^{k,\alpha}_{e,\Lambda}({\mathcal R})$ and $\widehat{C}^{k,\alpha}_{e,\Lambda}({\mathcal R})$  with similar definitions as above.

\bigskip
\noindent
{\bf The case of fixed period.}
 The  constructions in Sect. \ref{SAu13a} can be done for other values of $\lambda$ also. We will use the notation $\Psi^\lambda$,  $m(\lambda)$, $Q(\lambda)$ and $\sigma(\tau;\lambda)$  to indicate the dependence of all this quantities  on $\lambda$, which is considered as the bifurcation parameter. The transversality condition required for existence of small amplitude Stokes waves is equivalent to
\begin{equation}\label{Au8a}
\sigma_\lambda (\tau_*;\lambda)\neq 0\;\;\mbox{for $\lambda=\lambda_*$},
\end{equation}
see \cite{KN14} and \cite{Var23}.
Let
\begin{equation}\label{Au18a}
X=x,\;\;\;\;Y=\frac{hy}{\eta+h}
\end{equation}
and let
\begin{equation}\label{Au21ca}
{\mathcal R}=\{(X,Y)\,:\,X\in\Bbb R,\;\;0<Y<h\}.
\end{equation}
According to \cite{Var23} (see also Theorem 1.3, \cite{KN14}, for the case non-analytic $\omega$), there exists a continuous branch
\begin{equation}\label{Au21a}
(\eta(X;t),\phi(X,Y;t),\lambda(t))\in C^{2,\alpha}_{e,\Lambda_*}(\Bbb R)\times \widehat{C}^{2,\alpha}_{e,\Lambda_*}({\mathcal R})\times\Bbb R,
\end{equation}
of solutions to (\ref{KK2a}), which can be reparametrized analytically in a neighborhood of any point on
the curve for small $t$. In $(x,y)$ variables $\psi(x,y;t)=\phi (x,\frac{hy}{\eta+h};t)$.
All  small amplitude water waves are exhausted by
$$
\psi(x,y;t)= \Psi^\lambda(y),\;\;\eta=0,\;\;Q(\lambda)=\lambda^2,m(\lambda)=-\Psi^\lambda(0),
$$
and by (\ref{Au21a}). The first terms in (\ref{Au21a}) have the following form in $(x,y)$ coordinates:
$$
\eta(x)=t\cos(\tau_*x)+0(t^2),\;\;\; \lambda(t)=\lambda_*+O(t)
$$
and
\begin{equation}\label{Au21aa}
\psi(x,y;t)=\Psi^\lambda(y)+c_*t\gamma(y;\tau_*)\cos(\tau_*x)+O(t^2),
\end{equation}
where
\begin{equation}\label{Au19b}
c_*=-\Psi^{\lambda_*}_Y(0)
\end{equation}
and $\Psi^\lambda$ and $\gamma$ are smoothly extended outside $[0,h]$ in order to define then for $y<h$.

Differentiating (\ref{Au21aa}) with respect to $x$, we get
\begin{equation}\label{Au8aa}
\psi_x(x,y;t)=\tau_*c_*t\gamma(y;\tau_*)\cos(\tau_*x)+O(t^2).
\end{equation}
This implies the following

\begin{proposition}\label{PAua} Let $\tau_*$ is chosen according to {\rm Theorem \ref{TAu8}} and let {\rm (\ref{Au8a})} be valid. Then all small solution of {\rm (\ref{K2a})}, such that $\psi_x$ is not identically zero, satisfies {\rm (\ref{Au8aa})}.

\end{proposition}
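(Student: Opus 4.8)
The plan is to reduce the assertion to the classification of small solutions recorded immediately before the proposition. Under the transversality condition (\ref{Au8a}), the analytic (Crandall--Rabinowitz) bifurcation theory invoked in \cite{Var23} (and Theorem~1.3 of \cite{KN14}) shows that in a neighbourhood of the bifurcation point $(\eta,\psi,\lambda)=(0,\Psi^{\lambda_*},\lambda_*)$ in $C^{2,\alpha}_{e,\Lambda_*}(\Bbb R)\times\widehat{C}^{2,\alpha}_{e,\Lambda_*}({\mathcal R})\times\Bbb R$ every solution of (\ref{KK2a}), equivalently of (\ref{K2a}), is either a uniform stream $\psi=\Psi^\lambda(y)$, $\eta=0$, or a point of the Stokes branch (\ref{Au21a}) for some small $t$. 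It is precisely (\ref{Au8a}) that makes the relevant eigenvalue crossing simple and transversal, so that this local solution set is exhausted by the two families; this completeness statement is what I would take from the cited references.

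First I would record that, among small solutions, $\psi_x\equiv 0$ in $D$ characterises the uniform streams. Indeed, if $\psi$ depends on $y$ alone, then $\psi=0$ on ${\mathcal S}$ means $\psi(h+\eta(x))=0$ for every $x$; unless $\eta$ is constant this forces $\psi$ to vanish on a $y$-interval and hence, by unique continuation for $\Delta\psi+\omega(\psi)=0$, on all of ${\mathcal D}$, contradicting $\psi=-m$ on $\{y=0\}$. Consequently a small solution with $\psi_x$ not identically zero is not a uniform stream, and so, by the classification above, it coincides with $(\eta(\cdot\,;t),\phi(\cdot\,;t),\lambda(t))$ from (\ref{Au21a}) for some small $t\ne 0$; in the variables $(x,y)$ it is then described by (\ref{Au21aa}).

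It then remains to differentiate (\ref{Au21aa}) with respect to $x$: the term $\Psi^{\lambda(t)}(y)$ does not depend on $x$ and drops out, while the remainder is $O(t^2)$ in the $C^{2,\alpha}$ norm and, transported from the fixed strip ${\mathcal R}$ to ${\mathcal D}$ through the change of variables (\ref{Au18a}) — a diffeomorphism that is $C^{1,\alpha}$-close to the identity once $\eta$ is small — stays $O(t^2)$ in $C^{1,\alpha}$ after one $x$-differentiation, so that (\ref{Au8aa}) follows. The main (and essentially only) obstacle is the completeness of the local bifurcation picture secured by (\ref{Au8a}), i.e.\ ruling out small water waves lying off both families; the rest is routine bookkeeping of the expansion and of the coordinate change.
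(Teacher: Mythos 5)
Your proposal matches the paper's own argument: the transversality condition (\ref{Au8a}) guarantees that all small solutions are exhausted by the uniform streams and the local branch (\ref{Au21a}), and since a solution with $\psi_x$ not identically zero cannot be a uniform stream, it lies on the branch and (\ref{Au8aa}) follows by differentiating the expansion (\ref{Au21aa}) in $x$. The extra bookkeeping you add (characterising uniform streams via $\psi_x\equiv 0$ and tracking the coordinate change) is fine but does not change the route; this is essentially the same proof.
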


\bigskip
{\bf Variable period.} Here we assume that the Bernoulli constant is fixed and the period of the wave is chosen as a bifurcation parameter. We take
$$
\Psi(y)=\Psi^{\lambda_*}(y),\;\;Q=\lambda_*^2/2\;\; \mbox{and}\;\; m=-\Psi^{\lambda_*}(-h)
$$
 in (\ref{K2a}). %and denote by $\Lambda_0$
%Choosing the uniform stream solution we assume that $m=1$ in (\ref{XX1}) and $Q$ is defined by the relation (\ref{J11a}).
The bifurcation frequency is defined by  "{\bf The choice of the frequency $\tau_*$}" after Theorem \ref{TAu8}, where $\Psi'(0)=\lambda_*$. So in the case $\mu_1>0$ we have additional restriction on the choice of $\lambda_*$ which must guarantee the right sign of $\sigma(0)$\footnote{This corresponds to a well-known restriction on the Froude number for existence of small amplitude water waves for unidirectional flows, see \cite{Koz1}}.
Since the solution $\Psi$ does not depend on $x$ it has an arbitrary period $\Lambda>0$. Using that the frequency
$$
\tau=2\pi/\Lambda
$$
and that the dispersion equation $\sigma (\tau)=0$ has unique solution $\tau=\tau_*$, we conclude that the only bifurcation point is $\Lambda=\Lambda_*=2\pi/\tau_*$.

As it is shown in \cite{KN14} the transversality condition (\ref{Au8a}) is always satisfied.

We shall use the following change of variables
\begin{equation}\label{Au13aa}
X=\frac{\Lambda_*x}{\Lambda},\;\;;Y=\frac{h(y-\eta)}{\eta+h}.
\end{equation}

According to Theorem 1.2, \cite{KN14}, there exists a continuous branch
\begin{equation}\label{Au21b}
(\eta(X;t),\phi(X,Y;t),\Lambda(t))\in C^{2,\alpha}_{e,\Lambda_*}(\Bbb R)\times \widehat{C}^{2,\alpha}_{e,\Lambda_*}({\mathcal R})\times\Bbb R,
\end{equation}
such that this curve can be reparametrized analytically in a neighborhood of any point on
the curve for small $t$. The analytic vorticity is not considered in \cite{KN14} but now the extension to the analytic case is quite standard. Here
$$
\psi(x,y;t)=\phi(\frac{\Lambda_*x}{\Lambda},\frac{h(y-\eta)}{\eta+h};t)
$$

All small water waves are exhausted by
$$
\psi= \Psi^{\lambda_*}(Y),\;\;\eta=0,
$$
where the period $\Lambda$ is included implicitly,
and by (\ref{Au21b}).

The first terms in the asymptotics is given by
$$
\eta(x)=t\cos(\tau_*X)+0(t^2), \;\;\Lambda(t)=\Lambda_*+O(t^2)
$$
together with
\begin{equation}\label{Au21b}
\psi(x,y;t)=\Psi^{\lambda_*}(Y)+c_*t\gamma(Y;\tau_*)\cos(\tau_*X)+O(t^2),\;\
\end{equation}
where $c_*$ is given by (\ref{Au19b}).

Differentiating (\ref{Au21b}) with respect to $x$, we get
\begin{equation}\label{Au12aa}
\psi_x(x,y;t)=\tau_*c_*t\gamma(y;\tau_*)\cos(\tau_*X)+O(t^2).
\end{equation}

This implies the following
\begin{proposition}\label{PAua} Let $\tau_*$ is chosen according to {\rm Theorem \ref{TAu8}}. Then all small solution of {\rm (\ref{K2a})}, such that $\psi_x$ is not identically zero, satisfies {\rm (\ref{Au8aa})}.
\end{proposition}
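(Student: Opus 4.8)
The argument is short and parallels the one used for the earlier proposition in the fixed-period case: one only has to read off the leading term of $\psi_x$ from the description of the small-amplitude branch that has just been established, and here it is even cleaner, since no transversality assumption has to be imposed. First I would invoke the classification of small solutions recorded just before the statement (Theorem 1.2 of \cite{KN14}, whose extension to analytic $\omega$ is routine): every small-amplitude solution of \eqref{K2a} is either a uniform stream $\psi=\Psi^{\lambda_*}(Y)$ with $\eta\equiv0$, or a point of the analytic branch \eqref{Au21b}. Moreover the only bifurcation value is $\Lambda=\Lambda_*=2\pi/\tau_*$, because $\tau=2\pi/\Lambda$ and, by Theorem \ref{TAu8} together with the adopted sign convention for $\sigma(0)$ when $\mu_1>0$, the dispersion equation $\sigma(\tau)=0$ has $\tau_*$ as its unique admissible positive root. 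On every uniform stream $\psi_x\equiv0$, and at $t=0$ the branch \eqref{Au21b} also reduces to a uniform stream; hence the hypothesis that $\psi_x$ is not identically zero forces the solution to be a point of \eqref{Au21b} with $t\neq0$, $|t|$ small.

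For such a solution I would then use that the transversality condition \eqref{Au8a} holds automatically in the variable-period setting — this is proved in \cite{KN14} and is one of the structural advantages of taking the period as bifurcation parameter — so that \eqref{Au21b} is a genuine analytic curve through the bifurcation point and the second-order expansion displayed there is valid, uniformly in the $C^{2,\alpha}_{e,\Lambda_*}$-norm on the fixed domain $\mathcal R$, with $\Psi^{\lambda_*}$ and $\gamma(\cdot\,;\tau_*)$ smoothly extended past $y=h$. Pulling this expansion back to the physical variables through the diffeomorphism \eqref{Au13aa} — legitimate for $|t|$ small because $\eta=O(t)$ in $C^{2,\alpha}$ and $\Lambda(t)=\Lambda_*+O(t^2)$ — and differentiating in $x$ while keeping only the terms of order $t$, one arrives at \eqref{Au12aa}, which is precisely \eqref{Au8aa} read in the rescaled variable $X=\Lambda_*x/\Lambda$; this proves the proposition.

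The step that genuinely needs care — and the only plausible obstacle — is the bookkeeping across the change of variables \eqref{Au13aa}: the expansion \eqref{Au21b} lives on $\mathcal R$ in the variables $(X,Y)$, whereas \eqref{Au8aa} is a statement about $\psi_x$ in the physical variables $(x,y)$, so one must check that the $O(t^2)$ remainder is not degraded either by the composition $\psi(x,y;t)=\phi\big(\Lambda_*x/\Lambda,\,h(y-\eta)/(\eta+h);t\big)$ — in which $Y$, and hence even the ``trivial'' term $\Psi^{\lambda_*}(Y)$, depends on $x$ through $\eta$ — or by the extra $x$-derivative. Since every ingredient of that composition lies within $O(t^2)$ of its value at $t=0$ in the $C^{2,\alpha}$-topology, and the $x$-derivative of $\eta$ (hence of $Y$) is itself $O(t)$, a direct Taylor expansion controls all the $O(t)$ contributions and shows they assemble into the asserted term. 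Unlike in Lemma \ref{L1}, no maximum-principle or unique-continuation input enters; the proposition is purely a corollary of the analytic bifurcation construction.
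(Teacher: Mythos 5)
Your proposal is correct and follows essentially the same route as the paper: the paper also deduces the proposition directly from the exhaustion of small solutions by the uniform streams (on which $\psi_x\equiv 0$) together with the local branch (\ref{Au21b}), whose expansion differentiated in $x$ yields (\ref{Au12aa}), i.e.\ (\ref{Au8aa}) in the rescaled variable $X$. Your additional care about the change of variables (\ref{Au13aa}) and the automatic transversality is consistent with, and slightly more explicit than, the paper's one-line derivation.
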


\section{Applications}

In all forthcoming examples the vorticity function $\omega$ is assumed as before to be analytic.

We assume that the uniform stream solution $\Psi^{\lambda}$ is fixed and the frequency $\tau_*$ is chosen according to the rule
 {\bf The choice of the frequency $\tau_*$.} at the end of Sect. \ref{SAu13a}. We denote $\lambda_*=2\pi/\tau_*$.

\subsection{Waves with multiple critical layers}\label{SAu14b}

In paper \cite{Var23} the problem (\ref{K2a}) was considered  in the case when the free surface is given by $y=\eta(x)$, i.e. $\eta$ is $\Lambda$-periodic and ${\mathcal D}$ and ${\mathcal S}$ are defined by (\ref{Au21c}).
$$
{\mathcal D}=\{(x,y)\,:\,x\in\Bbb R,\;0<y<h+\eta(x)\},\;\;{\mathcal S}=\{(x,y)\;:\,x\in\Bbb R,\;y=h+\eta(x)\}.
$$
We assume that  the transversality condition (\ref{Au8a}) is satisfied. Then a global analytic  branch of solutions was constructed in \cite{Var23}.

We make the change of variables (\ref{Au18a}) and let ${\mathcal R}$ is given by (\ref{Au21ca}).
%$$
%\partial_x=\partial_X-\frac{h(h+y)\eta'}{(\eta+h)^2}\partial_Y,\;\;\partial_y=\frac{h}{h+\eta}\partial_Y.
%$$
The following result on  existence of the analytic branch of water waves is proved in \cite{Var23}.

The local curve obtained in Sect. \ref{SAu14ba} in "{\bf The case of fixed period}" can be
uniquely extended (up to reparametrization) to a continuous curve defined for $t\in\Bbb R$
\begin{equation}\label{Au19ba}
(\eta(X;t), \phi(X,Y;t),\lambda(t))\;\in\;C^{2,\alpha}_{e,\Lambda_*}(\Bbb R)\times \widehat{C}^{2,\alpha}_{e,\Lambda_*}({\mathcal R})\times\Bbb R
\end{equation}
of solutions to (\ref{K2a}), such that the following properties hold:

(i) The curve can be reparametrized analytically in a neighborhood of any point on
the curve.

(ii) The solutions are even and have the frequency $\tau_*$
for all $t\in \Bbb R$.

(iii) One of the following alternatives occur

(A) There exists subsequences $\{t_n\}_{n\in N}$, with
$t_n\to\infty$, along which at least one of (i) the solutions are unbounded, (ii) the surface
approaches the bed, or (iii) surface stagnation is approached, hold true.

(B)  or the curve is closed.

\bigskip
We will discuss the last alternative (B). For this goal we introduce
the mapping
\begin{equation}\label{Au19a}
(x,y)\to (X(x),Y(x,y)).
\end{equation}
One can verify that it is an diffeomorphism from $\overline{D}$ to $\overline{R}$, where
$$
R=(0,\Lambda/2)\times (0,h)
$$
 This isomorphism satisfies all conditions from Sect. \ref{SAu14c}.

\begin{theorem}\label{TAu19a} If $\Psi_y(0)>0$ ($\Psi_y(0)<0$) then
the function $\psi_x=\psi_x(x,y;t)$ ($-\psi_x$) satisfies properties {\rm (i)-(iii) (\ref{Au16a})} for any interval containing small positive $t$ and which has no points $t$ where $\psi_x(x,y;t)=0$ identically.
\end{theorem}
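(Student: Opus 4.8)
The plan is to argue by a continuity/connectedness argument along the branch, using Proposition \ref{PAua} as the starting point and Proposition \ref{PAu17a} (stability of properties (i)--(iii)) together with Lemma \ref{L2} (transfer of the properties through the diffeomorphism) to propagate the conclusion. First I would fix an interval $I$ containing small positive $t$ on which $\psi_x(\cdot,\cdot;t)$ does not vanish identically, and let $J\subseteq I$ be the set of $t\in I$ for which $\psi_x(x,y;t)$ (or $-\psi_x$, according to the sign of $\Psi_y(0)$) satisfies (i)--(iii) of (\ref{Au16a}) in $D$. By Proposition \ref{PAua} applied to the small-amplitude asymptotics (\ref{Au8aa}), together with Lemma \ref{L1} (whose hypothesis $\psi_x\geq 0$ in $D$, $\psi_x\not\equiv 0$, is exactly what (\ref{Au8aa}) gives for small $t>0$ once one checks the sign of $c_*\gamma(y;\tau_*)$ on the positive half-period — here is where $\Psi_y(0)>0$, i.e. $c_*<0$ by (\ref{Au19b}), and $\gamma>0$ from Theorem \ref{TAu8}, come in), the set $J$ is nonempty. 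So it suffices to show $J$ is both open and closed in $I\setminus\{t: \psi_x(\cdot;t)\equiv 0\}$, and then invoke connectedness; the exclusion of the identically-zero times is built into the statement.

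Openness of $J$: this is essentially Proposition \ref{PAu17a}. If $t_0\in J$, then $\psi_x(\cdot;t_0)$ satisfies (\ref{Au16b}) and (i)--(iii); since $t\mapsto \psi_x(\cdot,\cdot;t)$ is continuous into $C^2(\overline{D})$ — this must be extracted from the continuity of the branch (\ref{Au19ba}) in the $(X,Y)$ variables via Lemma \ref{L2}/the Remark after it, pulling back through the $C^2$ diffeomorphism (\ref{Au19a}), whose regularity and nonvanishing Jacobian are asserted to hold — the perturbation $\psi_x(\cdot;t)$ stays within $\varepsilon$ of $\psi_x(\cdot;t_0)$ for $t$ near $t_0$, so Proposition \ref{PAu17a} gives (i)--(iii) for those $t$; hence $t_0$ has a neighborhood in $J$. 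Closedness: take $t_n\in J$, $t_n\to t_0$ with $\psi_x(\cdot;t_0)\not\equiv 0$. By the same $C^2$-continuity, $\psi_x(\cdot;t_n)\to\psi_x(\cdot;t_0)$, and since each $\psi_x(\cdot;t_n)>0$ in $D\cup S$ (property (i)) we get $\psi_x(\cdot;t_0)\geq 0$ in $D$ in the limit. As $\psi_x(\cdot;t_0)\not\equiv 0$, Lemma \ref{L1} applies and yields (i)--(iii) for $t_0$, so $t_0\in J$. Combining openness and closedness on the connected set $I\setminus\{t:\psi_x\equiv 0\}$, and nonemptiness from small $t>0$, we conclude $J$ equals that set, which is the claim. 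The case $\Psi_y(0)<0$ is identical after replacing $\psi_x$ by $-\psi_x$ throughout (note $-\psi_x$ still satisfies the linearized equation (\ref{Au3ba}) and the boundary conditions).

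The main obstacle I anticipate is justifying the continuity (and, where needed, the sign-preservation in the limit) of $t\mapsto \psi_x(\cdot,\cdot;t)$ as a map into $C^2(\overline D)$ in the \emph{physical} variables. The branch (\ref{Au19ba}) is continuous in the fixed domain ${\mathcal R}$, and the domain $D={\mathcal D}_{\eta(t)}\cap\{0<x<\Lambda/2\}$ itself moves with $t$; one must show that the change of variables (\ref{Au19a}) depends continuously on $t$ in $C^2$ (which follows from $\eta(\cdot;t)\in C^{2,\alpha}_{e,\Lambda_*}$ depending continuously on $t$ and bounded away from $-h$), so that pulling back $\phi(\cdot;t)$ one indeed gets $\psi(\cdot;t)$, hence $\psi_x(\cdot;t)$, continuous in $C^2(\overline D)$. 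A secondary subtlety is verifying at the outset that the small-$t$ sign of $c_*t\,\gamma(y;\tau_*)\cos(\tau_* X)$ on $\{0<X<\Lambda/2\}$ is nonnegative (so that Lemma \ref{L1}'s hypothesis holds rather than its negative), which reduces to the signs recorded in Theorem \ref{TAu8} and formula (\ref{Au19b}); the $O(t^2)$ remainder does not spoil this because properties (ii)--(iii) are open conditions and give positivity near $L_1\cup L_2\cup L_3$, after which (i) for the pulled-back problem propagates positivity to all of $D$ exactly as in the proof of Proposition \ref{PAu17a}.
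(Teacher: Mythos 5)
Your proposal is correct and follows essentially the same route as the paper's proof: non-emptiness of the good set from the small-amplitude asymptotics, openness via the stability Proposition \ref{PAu17a}, closedness by passing the sign to the limit and re-applying Lemma \ref{L1}, all glued together by a connectedness/continuation argument on the interval. The only difference is presentational: the paper applies the stability step to the transplanted function $V=\psi_x\circ F$ in the fixed rectangle $R$ and moves back and forth with Lemma \ref{L2}, rather than tracking $C^2$-continuity in the moving physical domain, which is exactly the obstacle you identified and resolved in the same way.
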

\begin{proof} We consider the case $\Psi_y(0)>0$. the case $\Psi_y(0)<0$ is considered similarly. Denote by $V=V(X,Y;t)$ the image of the function $\psi_x$ under the mapping (\ref{Au19a}). By Sect. \ref{SAu14ba} the function $\psi_x$ satisfies all conditions (i)-(iii), (\ref{Au16a}), for small $t$. By Lemma \ref{L2} the same is true for the function $V$ in the domain $R$.

 Let the assertion of theorem is proved for a certain $t_*> 0$. Then by Proposition \ref{PAu17a} the assertion is true for $v$ close to $V(X,Y;t_*)$ in $C^2(\overline{R})$. In particular there exist an interval $(t_*-\epsilon,t_*+\epsilon)$ such that $V(X,Y;t)$ also satisfies (i)-(iii) in (\ref{Au16a}). By Lemma \ref{L2} the same is true for $\psi(x,y;t)$ for $|t-t_*|<\epsilon$.

% Let us show that it is also true for $t\in (t_*-\varepsilon,t_*+\varepsilon)$ where $varepsilon$ is a small positive number. By Lemma ??? this %implies that the function $\Phi(X,Y;t_*)$ satisfies (i)-(iii) for the domain ${\mathcal R}$.
%Since $\phi_X(X,Y;t)$ is zero for $X=0$, $X=\Lambda/2$, $Y=0$ and the normal derivatives there are non-vanishing we get that the same is true for %the functions $\phi_X$ for $t$ close to $t_*$. This implies the positivity of $\phi$ in a neighborhood of $\partial $ for $t$ close to $t_*$. The %positivity of $\phi$ inside ${\mathcal R}$ outside a small neighborhood of the boundary $\partial {\mathcal R}$ is quite evident.

 Assume that the assertion is true for the function $\psi_x$ in a certain interval $(0,t_*)$. Then the function $V$ satisfies (i)-(iii) on the same interval by Lemma \ref{L2}. Therefore the function $V\geq 0$ for  $t=t_*$. This implies that the same is true for the function $\psi_x$. Applying Lemma \ref{L1}, we conclude that the assertion is valid for the function $\psi_x$ at  $t_*$.

\end{proof}

\begin{corollary}\label{Cor19} The following assertions hold.

{\rm (i)} If (\ref{Au19ba}) is a closed curve then there are at least two points $\lambda_*$ and $\lambda_1\neq \lambda_*$ such that $(\lambda_*,\Psi^{\lambda_*}(y),0)$ and $(\lambda_1,\Psi^{\lambda_1}(y),0)$ belong to the curve.

{\rm (ii)} If the only uniform stream solution on the curve (\ref{Au19ba}) is $(\lambda_*,\Psi^{\lambda_*}(y),0)$
 then the curve {\rm (\ref{Au19ba})} is not closed and

 If $\Psi^{\lambda_*}(0)>0$ then $\eta$ is decreasing on $(0,\Lambda/2)$ for $t>0$;

 If $\Psi^{\lambda_*}(0)<0$ then $\eta$ is increasing on $(0,\Lambda/2)$ for $t>0$.
\end{corollary}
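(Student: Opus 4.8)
The plan is to deduce everything from Theorem~\ref{TAu19a} and the local structure of the branch (\ref{Au19ba}) at the bifurcation point $t=0$, where it meets the trivial family $\lambda\mapsto(\lambda,\Psi^{\lambda},0)$ at $\lambda=\lambda_*$. I argue in the case $\Psi^{\lambda_*}_y(0)>0$ (the sign hypothesis in Theorem~\ref{TAu19a}; the other case is identical with $\psi_x$ replaced by $-\psi_x$), and I use the diffeomorphism $(x,y)\mapsto(X(x),Y(x,y))$ onto $R=(0,\Lambda/2)\times(0,h)$ from the paragraph before Theorem~\ref{TAu19a}, which satisfies the hypotheses of Section~\ref{SAu14c}. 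Two elementary remarks are used throughout: a point of the branch is a uniform stream solution \emph{if and only if} $\psi_x(\cdot\,;t)\equiv0$, and a uniform stream solution is uniquely determined by the scalar $\lambda$.

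\emph{Part (i).} By construction the curve passes through $(\lambda_*,\Psi^{\lambda_*},0)$ at $t=0$, so it carries at least one uniform stream solution; I must produce a second one with $\lambda\neq\lambda_*$. Assume the curve is closed, of period $T$ in the parameter, and suppose for contradiction that $(\lambda_*,\Psi^{\lambda_*},0)$ is the only uniform stream solution on it. Then $\psi_x(\cdot\,;t)\not\equiv0$ for every $t$ in the punctured parameter circle, which I realise as the interval $(0,T)$; this interval contains arbitrarily small $t>0$ and no zero of $t\mapsto\psi_x(\cdot\,;t)$, so Theorem~\ref{TAu19a} applies along all of it and gives $\psi_x(\cdot\,;t)>0$ in $D$ for every $t\in(0,T)$. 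On the other hand, near $t=0$ the branch is a single analytic arc along which, by (\ref{Au8aa}), $\psi_x(\cdot\,;t)=t\,w+o(t)$ with a function $w\not\equiv0$ on $D$; the positivity just obtained forces $w\geq0$ on $D$, hence $w>0$ at some interior point $p$, and there $\psi_x(p;t)<0$ for parameters slightly below $0$. By periodicity these are the parameters slightly below $T$, hence lie in $(0,T)$, contradicting $\psi_x>0$ in $D$ on $(0,T)$. Thus the closed curve carries a second uniform stream solution $(\lambda_1,\Psi^{\lambda_1},0)$, and $\lambda_1\neq\lambda_*$ because such a solution is determined by $\lambda$.

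\emph{Part (ii).} Suppose $(\lambda_*,\Psi^{\lambda_*},0)$ is the only uniform stream solution on (\ref{Au19ba}). By (i) the curve is not closed. Moreover $\psi_x(\cdot\,;t)\not\equiv0$ for every $t>0$, so Theorem~\ref{TAu19a} gives properties (i)--(iii) of (\ref{Au16a}) for $\psi_x$ at every $t>0$; in particular $\psi_x(x,y;t)>0$ for $0<x<\Lambda/2$ and $y=h+\eta(x;t)$. Since the free surface is a graph here, differentiating $\psi(x,h+\eta(x;t);t)=0$ in $x$ gives $\eta'(x;t)=-\psi_x/\psi_y$ on the surface. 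As $\nabla\psi$ is normal to the surface and, by (\ref{J27ba}), does not vanish there, $\psi_y=\pm|\nabla\psi|\,(1+(\eta')^2)^{-1/2}\neq0$, so $\psi_y$ has a single sign along the surface for each fixed $t$; this sign is locally constant in $t$ as well (continuity of the branch together with non-vanishing), hence equals $\operatorname{sgn}\Psi^{\lambda_*}_y(h)$ for all $t>0$. Consequently $\eta'(\cdot\,;t)$ keeps one and the same sign on $(0,\Lambda/2)$ for all $t>0$, so $\eta(\cdot\,;t)$ is strictly monotone there; by connectedness of $(0,\infty)$ this is the sign seen for small $t>0$, which one reads off from the leading-order profile $\eta(x;t)=\bigl(\Psi^{\lambda_*}_y(0)/\Psi^{\lambda_*}_y(h)\bigr)\,t\cos(\tau_*x)+O(t^2)$ furnished by (\ref{Au21aa}). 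This is the asserted monotonicity.

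\emph{Main obstacle.} The change of variables and the appeals to Theorem~\ref{TAu19a} are routine. The two points needing care are: in (ii), that $\psi_y$ retains a single sign on the free surface all along the branch, which uses both that the surface stays a graph and that surface stagnation is not reached; and in (i), the reduction of the closedness hypothesis to the sign behaviour of $\psi_x$ on the two sides of $t=0$, which rests on the local analytic parametrisation of the branch at the bifurcation point.
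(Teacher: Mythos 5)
Your argument is correct and is essentially the paper's own proof, only written out in more detail: part (i) is the same contradiction obtained by propagating the sign of $\psi_x$ along the closed curve via Theorem~\ref{TAu19a} and observing that the local expansion (\ref{Au8aa}) forces $\psi_x$ to change sign across the bifurcation point, and part (ii) is the intended deduction from Theorem~\ref{TAu19a} through $\eta'=-\psi_x/\psi_y$ on the surface, with the sign of $\psi_y$ on ${\mathcal S}$ controlled by the graph property, the no-stagnation condition (\ref{J27ba}) and continuity along the branch. The extra care you take with the non-vanishing of $\psi_y$ and with reading off the small-$t$ sign is exactly what the paper leaves implicit (its two-line proof), so there is no substantive difference in approach.
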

\begin{proof} If there is only one uniform stream solution on the bifurcation curve then $\psi_x$ has the same sign on the curve outside this point but due to analyticity it has different sign for $t>0$ and $t>0$.

The second assertion follows from Theorem \ref{TAu19a}.

\end{proof}

\subsection{Overhanging free surface}

Here we consider a more general class of domains.
We assume that the free surface boundary ${\mathcal S}$ is given in a parametric form : $(x,y)=(u(s),v(s))$, $s\in\Bbb R$, where both functions are $\Lambda$-periodic, $v(s)>0$ and $u(s)\to\pm\infty$ when $s\to\pm\infty$. We assume also that the curve does not intersect itself. Here we use the same change of variables as in \cite{Wa} and \cite{CSrVar}.
%The following problem is considered in \cite{Wa}
%\begin{eqnarray}\label{Au6a}
%&&\Delta \psi+\omega(\psi)=0\;\;\mbox{in ${\mathcal D}$},\nonumber\\
%&&\frac{1}{2}|\nabla\psi|^2+\eta=Q\;\;\mbox{on ${\mathcal S}$},\nonumber\\
%&&\psi=0;\;\mbox{on ${\mathcal S}$},\nonumber\\
%&&\psi=-m\;\;\mbox{for $y=-h$}.
%\end{eqnarray}

Let
$$
{\mathcal R}_{-h} :=\{(X,Y)\,:\,X\in {\Bbb R},\;-h<Y< 0\}
$$
be a strip with depth $h$.

Introduce $C_h^{\Lambda_*}$ as the $\Lambda_*$-periodic Hilbert transform
given by
$$
C_h^{\Lambda_*} u(X)=-i\sum_{k\neq 0}\coth (k\tau_* h)\widehat{u}_k e^{ik\tau_* X},
$$
for an $\Lambda_*$-periodic function
$$
u(X) = \sum_{k\neq 0}\widehat{u}_ke^{ik\tau_* X}
$$
with zero average, appears. The following assertion on a conformal change of variables can be found in \cite{Wa}

\begin{lemma}
{\rm (i)} There exists a unique positive number $h$ such that there exists a conformal mapping
$H = U + iV$ from the strip ${\mathcal R}$
to ${\mathcal D}$ which admits an extension as a
homeomorphism between the closures of these domains, with $\Bbb R\times \{0\}$ being mapped
onto ${\mathcal S}$ and ${\Bbb R} \times \{-h\}$ being mapped onto $\Bbb R \times \{0\}$, and such that $U(X + \Lambda_*, Y) =
U(X, Y) + \Lambda_*$, $V (X + \Lambda_*, Y) = V (X, Y)$, $(X, Y)\in {\mathcal R}$.

{\rm (ii)} The conformal mapping $H$ is unique up to translations in the variable $X$ (in the
preimage and the image)

{\rm (iii)} $U$ and $V$ are (up to translations in the variable $X$) uniquely determined by $w=V(\cdot,0)-h$ as follows: $V$ is the unique ($\Lambda_*$-periodic) solution of
\begin{eqnarray*}
&&\Delta V = 0\;\;\mbox{ in ${\mathcal R}$}, \\
&&V = w + h\;\; \mbox{on $Y = 0$},\\
&&V = 0\;\; \mbox{on Y =-h},
\end{eqnarray*}
and $U$ is the (up to a real constant unique) harmonic conjugate of $-V$. Furthermore,
after a suitable horizontal translation, ${\mathcal S}$ can be parametrised by
\begin{equation}\label{Au13a}
{\mathcal S} = \{(X + (C^L_hw)(X), w(X) + h)\, :\, X\in {\Bbb R}\}
\end{equation}
and it holds that
$$
S\nabla V = (1 + C^L_hw', w')
$$

{\rm (iv)} If ${\mathcal S}$ is of class $C^{1,\beta}$ for some $\beta > 0$, then $U, V \in C^{1,\beta}({\mathcal R})$ and
$$
|dH/dz|^2 = |\nabla V |^2\neq 0\;\; \mbox{in}\; \overline{\mathcal R}
$$
\end{lemma}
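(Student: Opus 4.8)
The plan is to read this as a periodic Riemann mapping theorem combined with a boundary regularity statement, organising everything around the quotient of $\mathcal{D}$ by the period lattice. First I would pass to $\widetilde{\mathcal{D}}=\mathcal{D}/(\Lambda_*\mathbb{Z})$; by the hypotheses on $\mathcal{S}$ (non-self-intersecting, $v>0$, $u(s)\to\pm\infty$) and the simple connectedness of $\mathcal{D}$, this is a doubly connected domain whose two boundary components are Jordan curves, namely the projection of $\mathcal{S}$ and the bottom circle. By the uniformisation theorem for such domains, $\widetilde{\mathcal{D}}$ is conformally equivalent to a round flat cylinder of circumference $\Lambda_*$ and some height $h>0$, and $h$ — being $\Lambda_*$ times the conformal modulus of $\widetilde{\mathcal{D}}$ — is uniquely determined; this is the asserted unique positive number. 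Lifting the conformal equivalence to universal covers gives a biholomorphism $H=U+iV:\mathcal{R}_{-h}\to\mathcal{D}$ which intertwines the deck generators $Z\mapsto Z+\Lambda_*$ and $z\mapsto z+k\Lambda_*$; since the induced map in the circular direction has degree $\pm1$ and the extension respects the ordering of the horizontal coordinate, $k=1$, i.e. $U(X+\Lambda_*,Y)=U(X,Y)+\Lambda_*$ and $V(X+\Lambda_*,Y)=V(X,Y)$. Carath\'eodory's theorem then upgrades $H$ to a homeomorphism of the closures, with $\mathbb{R}\times\{-h\}$ onto the bottom and $\mathbb{R}\times\{0\}$ onto $\mathcal{S}$ by orientation; this settles (i). For (ii), two normalised maps differ by a conformal automorphism of $\mathcal{R}_{-h}$ commuting with translation by $\Lambda_*$ and fixing each boundary line, hence by a horizontal translation.

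Part (iii) I would deduce by taking imaginary parts: $V$ is harmonic and $\Lambda_*$-periodic in $\mathcal{R}_{-h}$, vanishes on $Y=-h$ (the preimage of the flat bottom) and equals $w+h$ on $Y=0$ by the very definition $w:=V(\cdot,0)-h$, so $V$ is determined from $w$ by uniqueness for this Dirichlet problem. The Cauchy--Riemann relations $U_X=V_Y$, $U_Y=-V_X$ say exactly that $U$ is a harmonic conjugate of $-V$, unique up to an additive real constant, which is the horizontal translation freedom. For the explicit parametrisation I would expand $w$ in Fourier series, solve the Dirichlet problem mode by mode — each mode $e^{ik\tau_*X}$ picking up the factor $\sinh(k\tau_*(Y+h))/\sinh(k\tau_*h)$ — compute $V_Y$ at $Y=0$, and integrate $U_X=V_Y$ in $X$; the multiplier that appears on the nonconstant modes is precisely $\coth(k\tau_*h)$, i.e. $C^{\Lambda_*}_h$, so $U(X,0)=X+(C^{\Lambda_*}_hw)(X)$ after fixing the constant, which is the stated parametrisation, and differentiating it in $X$ (using that $C^{\Lambda_*}_h$ commutes with $\partial_X$) yields the formula for $S\nabla V$. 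A point worth recording is that the distinguished $h$ is exactly characterised by the mean of $V(\cdot,0)$ being equal to $h$, which is forced by $U(X+\Lambda_*,Y)=U(X,Y)+\Lambda_*$; this is what makes the zeroth Fourier coefficient of $w$ vanish, as needed for $C^{\Lambda_*}_h w$ to be defined.

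For (iv), assuming $\mathcal{S}\in C^{1,\beta}$, I would invoke the Kellogg--Warschawski boundary regularity theorem to obtain $H\in C^{1,\beta}$ up to $Y=0$, while the Schwarz reflection principle across the straight line $Y=-h$ (where $V=0$) gives analyticity up to $Y=-h$; hence $U,V\in C^{1,\beta}(\overline{\mathcal{R}_{-h}})$. The identity $|dH/dz|^2=U_X^2+U_Y^2=V_Y^2+V_X^2=|\nabla V|^2$ is then immediate from the Cauchy--Riemann equations. Non-vanishing holds in the interior because $H$ is biholomorphic; on the bottom it follows from $V=0$ there, $V>0$ inside, and the Hopf lemma, which gives $V_Y>0$ and hence $|\nabla V|\ge V_Y>0$; on $\mathcal{S}$ the non-vanishing of $H'$ (equivalently of $\nabla V$) up to that arc is again part of the Kellogg--Warschawski theory for domains with a regular $C^{1,\beta}$ boundary curve. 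This last point — $C^{1,\beta}$ regularity and, above all, $H'\neq0$ right up to a free surface that is only $C^{1,\beta}$ and possibly overhanging — is the step I expect to cost the most work, since the H\"older exponent cannot be improved and the non-degeneracy of the derivative has to be extracted from the curve's regularity rather than simply assumed; parts (i)--(iii) are comparatively soft once the quotient-cylinder picture and the mode-by-mode computation are in place.
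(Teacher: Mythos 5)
Your sketch is essentially correct, but note that the paper itself offers no proof of this lemma: it is imported verbatim from Wahl\'en--Weber \cite{Wa} (and goes back to Constantin--Strauss--Varvaruca \cite{CSrVar} in the constant-vorticity setting), so there is no ``paper route'' to compare against beyond that citation. What you have reconstructed is, in outline, the standard argument used in those references: pass to the period quotient, which is a doubly connected domain with two Jordan boundary components; uniformise it onto a flat cylinder of circumference $\Lambda_*$, the height $h$ being fixed by the conformal modulus (this is the unique $h$ of part (i), equivalently the requirement that $w=V(\cdot,0)-h$ has zero mean, which you correctly identify as forced by $U(X+\Lambda_*,Y)=U(X,Y)+\Lambda_*$); lift to universal covers and normalise the deck-transformation conjugation to get the periodicity relations; extend homeomorphically to the closures; and obtain uniqueness up to horizontal translations from the automorphism group of the cylinder. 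Your mode-by-mode solution of the Dirichlet problem, with the multiplier $\coth(k\tau_*h)$ producing exactly the periodic Hilbert transform $C^{\Lambda_*}_h$ and hence the parametrisation \eqref{Au13a} and the trace formula for $\nabla V$, matches the computation in \cite{Wa}. Two places deserve the extra care you partly flag: the Carath\'eodory-type extension is usually stated for simply connected Jordan domains, so for the quotient annulus you should either localise near each boundary component or invoke the prime-end theory for doubly connected domains; and in (iv) the $C^{1,\beta}$ regularity together with the non-vanishing of $H'$ up to the (possibly overhanging) free surface is precisely the Kellogg--Warschawski statement for Dini-smooth boundaries, while at the flat bottom your combination of Schwarz reflection with the Hopf lemma applied to $V$ (positive inside, zero on $Y=-h$) is a clean and correct way to get $|\nabla V|\neq 0$ there. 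With those points made explicit, your argument is a complete and faithful substitute for the cited proof.
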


Thus the vector function $H$ delivers  diffeomorphism
$$
x=U(X,Y),\;\;\;y=V(X,Y)\;\;\mbox{mapping ${\mathcal R}_{-h}\;\to \;{\mathcal D}$}.
$$

We use the same bifurcation parameter $\lambda\in\Bbb R$ as in Sect. \ref{SAu14b}. The trivial solution corresponding to $\lambda$ is then $\Psi=\Psi^\lambda(y)$.

%The constant $Q$ is expressed through $\lambda$ and $\phi$, $w$.

In the paper \cite{Wa} it is proved the existence of a branch
$$
\phi(X,Y;s),\;w(X;s),\;\lambda(s)\in C^{2,\alpha}_{e,\Lambda_*}\times C^{1,\alpha}_{e,\Lambda_*}\times\Bbb R,
$$
which has a real-analytic reparametrisation locally around each of its points, such that the function
$$
\psi(x,y:s)=\phi(X(x,y;s),Y(x,y;s);s)\;\;\mbox{solves (\ref{K2a}) in ${\mathcal D}_s$}.
$$
Here ${\mathcal D}_s$ is the domain between $y=0$ and ${\mathcal S}_s$, where ${\mathcal S}_s$ is given by (\ref{Au13a}).

Moreover this analytic branch is the extension of the branch of small amplitude water waves constructed in Sect. \ref{SAu14b}.

It is shown in \cite{Wa} (see also \cite{CSrVar} for the case of the constant vorticity) that one of the
 following alternatives occur:

(i) this alternative describes typical behavior of the branch for large values of $s$, see Introduction (a)-(c);

(ii) the second alternative says that there exists a closed curve.% i.e. there exists $T > 0$ such that $(\lambda^{s+T}, w^{s+T},\phi^{s+T}) = %(\lambda^s, w^s, \phi^s)$ for all $s\in\Bbb R$.

\bigskip
Let us discuss the second alternative.

The same arguments can show that
Theorem \ref{TAu19a}  is true in this more general case. This means that the alternative (ii) can be replace

(ii)' There are at least two points $\lambda_*$ and $\lambda_1\neq \lambda_*$ such that $(\Psi^{\lambda_*}(y),0,\lambda_*)$ and $(\Psi^{\lambda_1}(y),0,\lambda_1)$ belong to the curve (\ref{Au19ba}).

Corollary \ref{Cor19} can be modified to the following

\begin{corollary}\label{Cor19z} The following assertions hold.

{\rm (i)} If {\rm (\ref{Au19ba})} is a closed curve then there are at least two points $\lambda_*$ and $\lambda_1\neq \lambda_*$ such that $(\lambda_*,\Psi^{\lambda_*}(y),0)$ and $(\lambda_1,\Psi^{\lambda_1}(y),0)$ belong to the curve.

{\rm (ii)} If the only uniform stream solution on the curve (\ref{Au19ba}) is $(\lambda_*,\Psi^{\lambda_*}(y),0)$
 then the curve {\rm (\ref{Au19ba})} is not closed and

 If $\Psi^{\lambda_*}(0)>0$ ($\Psi^{\lambda_*}(0)<0$) then $\psi_x>0$ ($\psi_x<0$) on $S$  for $t>0$.

\end{corollary}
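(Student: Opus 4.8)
The plan is to transplant the argument of Corollary~\ref{Cor19} into the present conformal setting, the only genuinely new ingredient being that Theorem~\ref{TAu19a} still holds here — which has already been granted in the discussion preceding the statement, the conformal map $x=U(X,Y)$, $y=V(X,Y)$ playing the role of the diffeomorphism $F$ of Section~\ref{SAu14c}. I write $s$ for the branch parameter (denoted $t$ in the statement) and treat the first case of the dichotomy, i.e. the sign condition of Theorem~\ref{TAu19a} under which it is $\psi_x$ itself (not $-\psi_x$) that enjoys properties (i)--(iii) of~(\ref{Au16a}) along the part of the branch free of uniform stream solutions; the other case follows by replacing $\psi_x$ with $-\psi_x$ throughout. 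I also use that the branch~(\ref{Au19ba}) extends the local branch of Section~\ref{SAu14ba}, so that the small-amplitude expansion~(\ref{Au8aa}) is available near $s=0$, and the elementary fact that a point of the branch is a uniform stream solution exactly when $\psi_x(\,\cdot\,;s)\equiv 0$ (then $\psi$ depends on $y$ alone, hence by the Bernoulli relation on $\mathcal S$ the surface is flat and $\psi=\Psi^{\lambda(s)}$).

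For part (i) I would argue contrapositively. Assume the branch is a closed curve and that the only uniform stream solution on it is $(\lambda_*,\Psi^{\lambda_*}(y),0)$, attained at $s=0$; then $\psi_x(\,\cdot\,;s)\not\equiv 0$ except where the curve returns to that point. Removing the single trivial point from the closed curve leaves an open arc, and on this arc Theorem~\ref{TAu19a} applies and forces $\psi_x>0$ on $D\cup S$ all along it. But the two ends of the arc both limit onto $(\lambda_*,\Psi^{\lambda_*}(y),0)$, one from the $s>0$ side and one from the $s<0$ side, while near $s=0$ one has, by~(\ref{Au8aa}), $\psi_x(x,y;s)=\tau_* c_* s\,\gamma(y;\tau_*)\cos(\tau_* X)+O(s^2)$ with $c_*\neq 0$; for $X\in(0,\Lambda_*/2)$ this is sign-indefinite in $X$ and reverses sign with $s$, so $\psi_x$ cannot keep one strict sign along the arc — a contradiction. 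Hence a closed branch must meet at least two distinct uniform stream solutions $(\lambda_*,\Psi^{\lambda_*}(y),0)$ and $(\lambda_1,\Psi^{\lambda_1}(y),0)$ with $\lambda_1\neq\lambda_*$.

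Part (ii) is then immediate: if $(\lambda_*,\Psi^{\lambda_*}(y),0)$ is the only uniform stream solution on the branch, part (i) rules out the curve being closed, and since $\psi_x(\,\cdot\,;s)\not\equiv 0$ for all $s\neq 0$, Theorem~\ref{TAu19a} applies on the whole half-line $s>0$ and yields $\psi_x>0$ on $D\cup S$, in particular on $S$, for every $s>0$; in the other case the same replacement gives $\psi_x<0$ on $S$ for $s>0$.

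The step I expect to be delicate — beyond the already-granted extension of Theorem~\ref{TAu19a} — is handling a closed branch that fails to be embedded, in particular one that revisits the trivial solution at some $s_1\neq 0$ with $\lambda(s_1)=\lambda_*$: then the complement of the trivial point is a union of arcs rather than one, and I would instead run the sign argument on each maximal arc between two consecutive visits to a uniform stream solution, using the local real-analytic reparametrisation near such a visit to see that $\psi_x$ is again sign-indefinite in $X$ at that endpoint, so the count in (i) is unaffected. A secondary technical point underlying the use of Theorem~\ref{TAu19a} here is that the conformal change of variables must depend continuously in $C^2(\overline{\mathcal R})$, with Jacobian bounded away from zero, on the branch parameter, so that the stability/continuation mechanism of Proposition~\ref{PAu17a} indeed propagates properties (i)--(iii) along the whole branch; this is precisely the content of the ``same arguments'' remark made before the statement, and it is the place where the conformal formulation (as opposed to the graph formulation) requires the extra care recorded in the cited lemma.
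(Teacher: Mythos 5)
Your overall route is the same as the paper's: propagate the strict sign of $\psi_x$ along the branch away from uniform streams via Theorem \ref{TAu19a} (extended to the conformal setting), observe from the local bifurcation expansion that the sign of $\psi_x$ is opposite for $t>0$ and $t<0$ near a trivial point, and conclude that a closed curve must contain a second uniform stream solution, part (ii) then following directly from Theorem \ref{TAu19a}. The extra care you take with a closed curve that revisits the same trivial solution, and with the $C^2$-dependence of the conformal map along the branch, goes slightly beyond the paper's one-line proof of Corollary \ref{Cor19} and is welcome.

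There is, however, one concrete defect in your part (i): the claim that the leading term $\tau_* c_* s\,\gamma(y;\tau_*)\cos(\tau_* X)$ is ``sign-indefinite in $X$'' on $(0,\Lambda_*/2)$ and that this already contradicts $\psi_x>0$ along the arc. You have taken formula (\ref{Au8aa}) at face value, but that formula contains a slip: differentiating (\ref{Au21aa}) in $x$ gives $\psi_x(x,y;t)=-\tau_* c_* t\,\gamma(y;\tau_*)\sin(\tau_* x)+O(t^2)$, whose leading term does \emph{not} change sign for $x\in(0,\Lambda_*/2)$ (recall $\gamma(\cdot;\tau_*)$ is of one sign by the choice of $\tau_*$ in Theorem \ref{TAu8}). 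If the leading term were genuinely sign-indefinite in $X$, then $\psi_x$ would change sign in $D$ for \emph{every} small $t\neq 0$, contradicting Proposition \ref{PAua} and Theorem \ref{TAu19a} themselves and destroying your own part (ii), so that half of your sentence cannot be used. The argument survives because the other half of the same sentence is the correct and sufficient one: the leading term reverses sign with $s$, so the two ends of the arc approaching the trivial point from $s>0$ and $s<0$ carry opposite strict signs of $\psi_x$, which is incompatible with a single strict sign along the whole arc. Rewrite the contradiction using only this $s$-reversal (with the $\sin$ form of the expansion) and the proof is sound and matches the paper's.
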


Since the inclination of the bondary curve is defined by $-\psi_x/\psi_y$,  the positivity property in Corollary \ref{Cor19z}(ii) can be used to exclude certain configurations of the limit self-intersections of the boundary curves (see \cite{CSrVar}).

\subsection{Water waves with variable period}

Let  the variables $X$ and $Y$ are given by (\ref{Au13aa}).
%If $(\Psi, h)$ is defined by $Q$ and assumption (II) holds for Equation (11) corresponding
%to this stream solution, then there exist $\varepsilon > 0$ and a continuously differentiable
%mapping from $ t\in (-\varepsilon,\varepsilon)$ to a neighbourhood of $(0, 0)$ in $\Bbb R\times\Pi^{2,\alpha}_0$.

The local curve obtained in Sect. \ref{SAu14ba} can be
uniquely extended (up to reparametrization) to a continuous curve defined for $t\in\Bbb R$
\begin{equation}\label{Au10c}
(\phi(X,Y;t),\eta(X;t),\Lambda(t))\;\in\;C^{2,\alpha}({\mathcal R})\times C^{2,\alpha}(\Bbb R)\times\Bbb R
\end{equation}
of solutions to (\ref{K2a}), such that the following properties hold:

(i) The curve can be reparametrized analytically in a neighborhood of any point on
the curve.

(ii) The solutions are even and have wavenumber $\tau_*$,
for all $t\in \Bbb R$.

The following asymptotics for small $f$ hold
$$
\eta(X;t)=t \cos(\tau_* X)+O(t^2),\;\;\Lambda(t)=\Lambda_*+O(t^2),\;\;\phi(X,Y;t)=\Psi^{\lambda_*}(Y)+tc_*\gamma(Y;\tau_*)\cos(\tau_*X)+O(t^2),
$$
where $c_*$ is the same as in (\ref{Au19b}).

(iii) One of the following alternatives occur

(A) There exists a subsequence $\{t_n\}_{n\in N}$, with
$t_n\to\infty$, along which at least one of (i) the solutions are unbounded, (ii) the surface
approaches the bed, (iii) the period tends to $0$ or to $\infty$, or (iv) surface stagnation is approached, hold true.

(B)  or the curve is closed.

\bigskip
The above assertion can be proved in same way as similar assertion in \cite{Var23}, see Sect. \ref{SAu14b} for its formulation. The only difference here is that the bifurcation parameter is the period and in verification of compactness and Fredholm properties in the global bifurcation theorem must be changed a little bit. We refer also to \cite{KL1} and \cite{KL3} where the assertion is proved for the unidirectional flows.

\bigskip

Let us discuss the option (B).

First we note that Theorem \ref{TAu19a} is valid with the same proof. Corollary \ref{Cor19} can be improved in the part (i) as follows

\begin{corollary} The following assertions hold.

{\rm (i)} The curve is not closed. The only uniform stream solution on this curve is met when $t=0$. So the alternative (B) can be excluded.

{\rm (ii)} If $\Psi^{\lambda_*}(0)>0$ ($\Psi^{\lambda_*}(0)<0$) then $\eta$ is decreasing (increasing) on $(0,\Lambda/2)$ for $t>0$

\end{corollary}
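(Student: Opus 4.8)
\medskip
\noindent\emph{Proposed proof (plan).} The decisive feature of the variable‑period setting is that the dispersion equation $\sigma(\tau)=0$ has the \emph{single} root $\tau_*$, so the family of uniform streams $\{(\Psi^{\lambda_*}(y),0,\Lambda)\}_{\Lambda>0}$ admits exactly one bifurcation point, $P:=(\Psi^{\lambda_*}(y),0,\Lambda_*)$ with $\Lambda_*=2\pi/\tau_*$; moreover the transversality condition \eqref{Au8a} holds automatically and Theorem \ref{TAu19a} is available verbatim. The plan is: (1) show that every uniform stream lying on the global branch \eqref{Au10c} coincides with $P$; (2) combine this with the sign/analyticity mechanism of Corollary \ref{Cor19}(i) to conclude that the branch cannot be closed and meets $P$ only at $t=0$, which is assertion (i); (3) obtain (ii) by differentiating the kinematic boundary condition, as in Corollary \ref{Cor19}(ii).

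\smallskip
\noindent\emph{Step 1.} A point of the branch is a uniform stream precisely when $\eta(\cdot;t)\equiv0$, i.e. $\psi_x(\cdot;t)\equiv0$. Since the branch is locally analytically reparametrisable, $\psi_x$ depends analytically on the parameter, and it is not identically trivial because of the small‑amplitude expansion \eqref{Au12aa}; hence the parameter values carrying a uniform stream are isolated. At such a value $t_1$ genuine waves accumulate at the trivial state $(\Psi^{\lambda_*}(y),0,\Lambda(t_1))$, so $\Lambda(t_1)$ must be a bifurcation value, and since the only root of $\sigma$ is $\tau_*$, the linearisation over a trivial state is invertible for $\Lambda\neq\Lambda_*$ (implicit function theorem: only trivial solutions nearby); therefore $\Lambda(t_1)=\Lambda_*$. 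Thus every uniform stream on the branch equals $P$.

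\smallskip
\noindent\emph{Step 2.} Suppose alternative (B) holds, i.e. the branch is a closed curve $\Gamma$. By Step 1 the only uniform stream on $\Gamma$ is $P$; near $P$ the nontrivial solution set is a single analytic arc $\mathcal B$ (simple bifurcation point, using transversality), so $\Gamma$ coincides with $\mathcal B$ near $P$ and $\Gamma\setminus\{P\}$ is a connected arc whose two ends are the two ``prongs'' of $\mathcal B\setminus\{P\}$. On $\Gamma\setminus\{P\}$ one has $\psi_x\not\equiv0$ everywhere, and this arc contains the small‑amplitude waves with $t>0$; by Theorem \ref{TAu19a}, along the whole of $\Gamma\setminus\{P\}$ the function $\psi_x$ (or $-\psi_x$) satisfies properties (i)--(iii) of \eqref{Au16a}, in particular it keeps one strict sign in $D\cup S$. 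But by \eqref{Au12aa} the leading term of $\psi_x$ is $\tau_*c_*t\,\gamma(y;\tau_*)\cos(\tau_*X)$, which changes sign as $t$ crosses $0$, so $\psi_x$ takes opposite signs on the two prongs of $\mathcal B\setminus\{P\}$ --- a contradiction. Hence (B) is impossible and the branch is not closed; and since meeting $P$ at two distinct parameter values would, by the single‑arc structure at $P$, force the branch to retrace $\mathcal B$ and be closed, $P$ is met only at $t=0$. This proves (i).

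\smallskip
\noindent\emph{Step 3 and main obstacle.} For (ii), fix $t>0$: by (i) the solution is a genuine wave, so $\psi_x\not\equiv0$, and Theorem \ref{TAu19a} gives $\psi_x>0$ on $S$ when $\Psi^{\lambda_*}(0)>0$ and $\psi_x<0$ on $S$ when $\Psi^{\lambda_*}(0)<0$. The free surface is the graph $y=h+\eta(x)$; differentiating $\psi(x,h+\eta(x);t)=0$ in $x$ gives $\eta'(x)=-\psi_x/\psi_y$ on $(0,\Lambda/2)$. Since the surface is a graph its tangent is nowhere vertical, so $\psi_y\neq0$ on $\overline S$; being continuous and nonvanishing along the whole branch, with value $\kappa=\Psi'(h)$ at $t=0$, $\psi_y$ keeps a fixed sign on $\overline S$, whence (equivalently, by the corollary after Lemma \ref{L1}, $\eta'$ has the sign of $-\psi_y$) one gets $\eta'<0$, i.e. $\eta$ decreasing, when $\Psi^{\lambda_*}(0)>0$, and $\eta'>0$, i.e. $\eta$ increasing, when $\Psi^{\lambda_*}(0)<0$. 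The part I expect to require the most care is Step 1 --- that \emph{every} uniform stream on the global branch must be the unique bifurcation point $P$ --- since it hinges on coupling the implicit function theorem at the invertible linearisations over non‑bifurcation trivial states with the local single‑analytic‑arc description at $P$; once this is secured, Step 2 is precisely the sign‑versus‑analyticity argument already used for Corollary \ref{Cor19}, and Step 3 is routine sign bookkeeping.
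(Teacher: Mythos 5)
Your proposal is correct and follows essentially the same route as the paper: it reduces assertion (i) to the fact that in the variable-period setting the trivial family admits only the single bifurcation point $\Lambda_*$ (unique root of the dispersion equation, $\lambda$ fixed at $\lambda_*$), then reuses the sign-persistence (Theorem \ref{TAu19a}) versus analyticity/sign-change-at-$t=0$ argument of Corollary \ref{Cor19}, and derives (ii) from Theorem \ref{TAu19a} exactly as in Corollary \ref{Cor19}(ii). You merely spell out steps the paper delegates to Corollary \ref{Cor19} and to the remark in Sect. \ref{SAu14ba}.
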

\begin{proof} Since the option (i) in Corollary \ref{Cor19} is excluded, see Sect. \ref{SAu14ba} (variable period), we see that  Corollary \ref{Cor19}(i) can be replaced by assetion (i) from this corollary.

The proof of (ii) is the same as in  Corollary \ref{Cor19}.
\end{proof}

\bigskip
\noindent {\bf Acknowledgements.} V.~K. was supported by the Swedish Research
Council (VR), 2017-03837.

\section{References}

{

\end{document}